\DeclareMathAlphabet{\mathpzc}{OT1}{pzc}{m}{it}
\DeclareMathAlphabet{\mathcalligra}{T1}{calligra}{m}{n}
\def\pplogo{\vbox{\kern-\headheight\kern -15pt
		\halign{##&##\hfil\cr&{
				\ppnumber}\cr\rule{0pt}{2.5ex}&\ppdate\cr} }} \makeatletter
\def\ps@firstpage{\ps@empty \def\@oddhead{\hss\pplogo}%
	\let\@evenhead\@oddhead 
	
}
\renewcommand{\thetable}{\@Alph\c@table}
\DeclareFontFamily{OT1}{rsfs10}{}
\DeclareFontShape{OT1}{rsfs10}{m}{n}{ <-> rsfs10 }{}
\DeclareMathAlphabet{\mathscript}{OT1}{rsfs10}{m}{n}
\def\ppnumber{\vbox{\baselineskip16pt }}
\def\ppdate{}
\date{} 
\newsavebox{\fmbox}
\author{Antonella Grassi and Timo Weigand}
\title{Elliptic  threefolds with high Mordell-Weil rank}
\address{Antonella Grassi, Dipartimento di Matematica, Universit\`a di Bologna, Bologna, Italy}
\address{Department of Mathematics, University of Pennsylvania, Philadelphia, PA 19104}
\email{antonella.grassi3@unibo.it}
\address{Timo Weigand, II. Institut f\"ur Theoretische Physik, Universit\"at Hamburg, Luruper Chaussee 149, 22607 Hamburg, Germany}
\address{Zentrum f{\"u}r Mathematische Physik, Universit\"at Hamburg, Bundesstrasse 55, 20146 Hamburg, Germany}
\email{timo.weigand@desy.de}
\theoremstyle{plain}
\newtheorem{thm}{Theorem}
\newtheorem{corollary}[thm]{Corollary}
\newtheorem{proposition}[thm]{Proposition}
\newtheorem{result}[thm]{Result (Physics)}
\newtheorem{lemma}[thm]{Lemma}
\theoremstyle{definition}
\newtheorem{definition}[thm]{Definition}
\theoremstyle{remark}
\newtheorem{remark}[thm]{Remark}
\numberwithin{thm}{section}
\def\C{\mathbb{C}}
\def\Z{\mathbb{Z}}
\def\Q{\mathbb{Q}}
\def\P{\mathbb{P}}
\def\co{\mathcal O}
\newcommand{\D}{{\mathfrak d}}
\newcommand{\E}{{\mathfrak e}}
\def\Sp{\operatorname{PSp}}
\def\MW{\operatorname{MW}}
\def\ST{\operatorname{ST}}
\def\NDE{\operatorname{NDE}}
\def\rk{\operatorname{rk}}
\def\mw{\operatorname{MW}}
\def\u1{\operatorname{U(1)}}
\def\NE{\operatorname{NE}}
\def\F{f_{\Sigma'}}
\def\pu{{\mathbb P^1}}
\def\pd{{\mathbb P^2}}
\def\p1{{\pi_i}}
\def\Do{{D_i^i}}
\def\Du{{D_{i+1}^i}}
\def\Dt{{D_{i+4}^i}}
\def\D3{{D_{i+3}^i}}
\def\F{{\p1^*(f)}}
\def\E{{\mathcal E}}
\def\PA{\mathcal{P}_{j}^{i,A}}
\def\PB{\mathcal{P}_{j}^{i,B}}
\def\sk{{\mathsf{s}_k}}
\def\soh{{\hat{\mathsf{s}}_0}}
\def\skh{{\hat{\mathsf{s}}_k}}
\def\lkh{{\hat{\mathsf{\ell}}_l}}
\def\fh{{\hat{\mathsf{f}}}}
\newcommand{\be}{\begin{equation}}
\newcommand{\ee}{\end{equation}}
\newcommand{\bea}{\begin{eqnarray}}
\newcommand{\eea}{\end{eqnarray}}
\newcommand{\sfs}{\mathsf{s}}
\begin{document}
				\date{\today}
		
		\ppnumber{\qquad \quad \quad \quad \quad \qquad \quad \quad \quad \quad\qquad \quad \quad \quad \quad  \qquad \quad \quad \quad \quad \qquad \quad \quad \quad \quad ZMP-HH/21-9\\ }

	\date{\today}
	
\subjclass{14J27, 81T30, 14E30, 14G05, 14D99, 14N35 }

\begin{abstract} 
 We present the first examples of smooth elliptic Calabi-Yau  threefolds with Mordell-Weil rank 10, the highest currently known value.
They are given by the Schoen threefolds introduced by Namikawa; there are six isolated fibers of Kodaira Type IV. 
We explicitly compute the Shioda homomorphism 
and 
 the induced height pairing.  Compactification of F-theory on these threefolds gives an effective theory in six dimensions which contains  ten abelian gauge group factors.
 We compute the massless matter spectrum. 
In particular, we  show that the charged singlet matter need not reside at enhancement loci of Type $I_2$, as previously believed. 
 We relate the multiplicities of the massless spectrum to   genus-zero Gopakumar-Vafa  invariants and other geometric  quantities of the Calabi-Yau.
We show that the gravitational and abelian anomaly cancellation conditions are satisfied. We prove a Geometric Anomaly Cancellation equation and we deduce birational equivalence for the quantities in the spectrum.
We explicitly describe a Weierstrass model over $\pd$  of the
 Calabi-Yau threefolds as a log canonical model and compare  it to 
a construction by Elkies and classical results of Burkhardt.

\end{abstract}

\maketitle
\section{Introduction}

The Mordell-Weil group of sections of an elliptically fibered Calabi-Yau variety  is of 
considerable interest also in physics:  it has a special role in establishing an upper bound on the number of massless particle species in a consistent theory of quantum gravity. In fact, the rank of the Mordell-Weil group of an elliptically fibered Calabi-Yau threefold $X$ determines  the rank of the abelian (non-Cartan) gauge algebra  in compactifications of F-theory (see for example \cite{Weigand:2018rez,Cvetic:2018bni}). It 
is thereby directly related to aspects of quantum gravity.
Consistency conditions of certain BPS strings in F-theory compactifications \cite{Kim:2019vuc} imply various bounds on the rank of the abelian gauge group in minimally supersymmetric compactifications \cite{Lee:2019skh}. The results of \cite{Lee:2019skh} hence yield interesting implications for algebraic geometry:
 the bound predicted by physics implies that  on an elliptic K3 surface  $0 \leq {\rm rk}(\MW({\rm K3})) \leq 18$ and for  elliptically fiberd Calabi-Yau threefolds $X \to B$,   $ {\rm rk}(\MW(X)) \leq 20$ if  $B \neq \mathbb P^2$ and $ {\rm rk}(\MW(X)) \leq 24$ if $B= \mathbb P^2$ (though it has been conjectured that
both bounds can be sharpened further).
For elliptic K3 surfaces the bounds are   in agreement
with known bounds in mathematics \cite{CoxMW1982} and  all such possible Mordell-Weil ranks 
are explicitly realized  \cite{Kuwata2000MW}, \cite{Kloosterman2007}.
Even for K3, however, it is not feasible to find explicit generators for the Mordell-Weil group for all such cases.

For  elliptic Calabi-Yau threefolds, by contrast, no bound to the rank of the Mordell-Weil group is known in the mathematics literature.
This highly motivates the search for elliptic fibrations for Calabi-Yau threefolds with high Mordell-Weil rank. 
In Section \ref{NRConstruction} we present smooth 
elliptic fibrations $X_i \to B$ with $ {\rm rk}(\MW(X_i)) = 10$, the highest currently known value, and we investigate their properties as elliptic varieties.  The discriminant of the elliptic fibration
is supported on six cuspidal curves on the base $B$, the generic singular fibers are of Kodaira Type $II$ and enhance to 
Kodaira Type $IV$ over
the six cusp points of the cuspidal curves  ({\it Theorems \ref{description}, \ref{MW}}).

The $X_i$ come from  ``the Namikawa examples" \cite{NamikawaStratified, RossiMIchele20174NamikawaEx} studied by Namikawa and Rossi for  their deformation properties. They are resolutions of  threefolds of the form $\bar{X} \stackrel{def}=B \times _{\P^1} B'$, with $B$ and $B'$ certain rational elliptic surfaces. These were first introduced by Schoen in \cite{Schoen87}, and are often referred to as ``the Schoens".  Depending on the type and relative location of the singular fibers of the  two rational elliptic surfaces,  $\bar X$ can be smooth or singular, with singularities of different types. Schoen first studied particular configurations such that $\bar X$ is birational to a smooth Calabi-Yau.  The Schoens have interesting arithmetic properties and they have been studied  also in  many other contexts, from   birational geometry to  string theory. In the particular context of studying the Mordell-Weil rank of Calabi-Yau threefolds, the authors of \cite{Morrison:2016lix}, building on \cite{KapustkaGM}, present
several examples of Schoen varieties with a Mordell-Weil rank of up to $9$. 
 We  conjecture that 
the
Namikawa-like  examples lead to the maximal possible Mordell-Weil rank within the class of Schoen manifolds, as we point out 
before Section \ref{sec_Cohom}.

The geometry of a Calabi-Yau  is closely related to the  massless particle spectrum and the relations that the quantities  in the spectrum  must satisfy, the anomaly cancellation conditions. This connection brings  us to four questions:  1)  to   establish  a dictionary for the correspondence,  2) to find a geometric counterpart for  the ``anomaly cancellation conditions",   3) to calculate explicitly  the geometric quantities in the spectrum and 4) to extract the geometric properties implied by the anomalies.
  In Section \ref{sec_Anomalies} we address 1) and 2): we review the results from physics which provide the   dictionary for the correspondence, as well as  for the anomaly cancellation conditions in subsection \ref{PhysicsResults};  
in  subsection \ref{GeomAnom} we   define a geometric counterpart formula for the gravitational anomaly cancellation condition, the {\it Geometric Anomaly Equation \ref{genconj}} (along the lines of \cite{GrassiWeigand2} where we  write a more general formula).

To  address 3), that is to  evaluate the spectrum for the Namikawa threefolds, 
the gravitational and gauge anomalies,
 and the   Geometric Anomaly Equation  \ref{genconj}, we need to explicitly determine the Poincar\'e pairing between $H^2(X_i, \Z) $  and $H_2(X_i, \Z)$ ({\it Propositions \ref{intersections} and \ref{lDoDo}})), the Shioda map ({\it Corollary \ref{intersectionsForAnom}}), the height pairings ({\it Corollary \ref{heightp}}), the  relative  genus-zero Gopakumar-Vafa  invariants of holomorphic curves ({\it Proposition \ref{GV}}) and other  geometric invariants of the elliptic Calabi-Yau $X_i$ ({\it Corollaries \ref{intersectionsForGrAnom} and \ref{intersectionsForGaugeAnom}}). The computations leading to the spectrum  are involved. 

The results allow us to  compute the spectrum ({\it Property \ref{spectrum}}) and the $\u1$ charges ({\it Proposition \ref{Charges-NR}}). In particular the analysis exemplifies that the charged singlet matter need not  reside at enhancement loci of Type $I_2$, as previously believed. We verify that the anomaly cancellation conditions in physics are satisfied ({\it Proposition \ref{NR-anS}})  by proving the mathematical counterparts of the {gravitational and $\u1$} anomaly equations \cite{Park:2011ji},
along the lines of what was stated in \cite{GrassiWeigand2}.  As a consequence we obtain birational invariants of the non $\Q$-factorial singularities of the Weierstrass model $\bar X \to B$ ({\it Corollary \ref{birinv}}), which answers 4).

In the last Section  \ref{sec_Elkies} we analyse a  family of Weierstrass models  $W_{\rm NDE} \to \P^2$,   constructed by Elkies \cite{Elkies}, with $\rk \MW(W_{\rm NDE}/ \pd)=10$; one particular  model shares  similarities with the Namikawa threefolds.    $W_{\NDE} $ is numerically Calabi-Yau, but  
Elkies does not make any statement about its  minimal resolution. 
 We compare  the Weierstrass models over $\pd$  of the Namikawa-Rossi  threefolds  with  the ones constructed by Elkies  
 by   explicitly describing a Weierstrass model  $W_{\pd} \to \pd$  of the Namikawa Calabi-Yau as a suitable log canonical model ({\it Corollary \ref{Model}, Theorem \ref{lc} and Corollary \ref{Elkies}}).   Then we take the first steps in addressing the question of  whether 
  $W_{\NDE} \to \P^2$
  is  birationally Calabi-Yau,  by building on  classical results of Burkhardt, 
 leaving the construction of a  elliptic Calabi-Yau  with $\rk (\MW)=10$ in \cite{Elkies} conjectural. 

\smallskip

\paragraph{\bf Acknowledgements}
 We thank M. Rossi for correspondence, and M. Liu and  S. Verra for  helpful conversations. We also thank the referees for useful comments.
  The work of A.G. is partially supported by PRIN ``Moduli and Lie Theory''. A.G. is a member of GNSAGA of INDAM.
  The work of T.W. is supported in part by Deutsche Forschungsgemeinschaft under Germany's Excellence Strategy EXC 2121 Quantum Universe 390833306.

\section{The Namikawa-Rossi Construction}\label{NRConstruction}

Let $r: B \to  \P^1$ be a smooth  rational elliptic surface with section and  $6$ cuspidal fibers, that is  $6$ fibers of Kodaira Type $II$.  $B$  is defined by  the Weierstrass equation $y^2z=x^3 + {b} z$  in the projective bundle $\P(\mathcal{E}) =\P( \co_{\P^1}(3) \oplus   \co_{\P^1}(2) \oplus  \co_{\P^1})$, where  $ b \in H^0(\pu, \co_\pu(6)$  is a general section.
Let $r': B' \to  \P^1$ be a different copy of the same surface,  with   Weierstrass equation  $u^2 w=v^3 + {b} w.$ 

 \begin{lemma}\cite{RossiMIchele20174NamikawaEx,NamikawaStratified}
		The threefold  $\bar{X} \stackrel{def}=B \times _{\P^1} B'$ is a Calabi-Yau threefold, singular at $6$ points  $\{P_1, \cdots, P_6 \}$ of local analytic  equation $\bar{\mathrm{x}}^3-\bar{\mathrm{v}}^3-\bar{\mathrm{y}}^2+\bar{\mathrm{u}}^2=0$. 
	\end{lemma}
\begin{lemma}\label{auto}\cite{NamikawaStratified, RossiMIchele20174NamikawaEx}
	The threefold  $\bar{X} \stackrel{def}=B \times _{\P^1} B' \subset \P(\mathcal{E}) \times \P(\mathcal{E}) \times \pu$ is endowed with an automorphism $\tau$ of order $6$ induced by the automorphism of the ambient space: 
	\begin{eqnarray*}
		\tau_{\mathcal E}:  \P(\mathcal{E}) \times \P(\mathcal{E}) \times \pu &\to&  \P(\mathcal{E}) \times \P(\mathcal{E}) \times \pu \\
		( [x,y,z], \ [v,u,w], \ [\lambda_0, \lambda_1]) &\mapsto& ( [ x,y,z], \ [\epsilon v,-u,w], \ [\lambda_0, \lambda_1]),
	\end{eqnarray*}	
	with $\epsilon$  a primitive cube root of the unity.
\end{lemma}
\begin{definition}\label{autodef}
Let $D_i  \stackrel{def}= \bar X \cap \{(\tau_{\mathcal E})^i  ([x,y,z], \ [x,y,z], \ [\lambda_0, \lambda_1]) \}, \ \ 0 \leq i \leq 5 $.  
\end{definition}

$D_0$ is then the diagonal.

\begin{lemma}\label{localequ}\cite{NamikawaStratified, RossiMIchele20174NamikawaEx} Each divisor $D_i$ contains the singular locus $\{P_1, \cdots, P_6\}$ of $\bar X$.  
	\begin{enumerate}
		\item 
		The local equation around a fixed point  $P_j \in \bar X$, $j = 1, \cdots, 6$, can be written as 
		\begin{equation*}
			xv[(1+\epsilon )v-\epsilon x]=yu \,.
		\end{equation*}
		\item  The local equations of $D_i, D_{i+1}, D_{i+3}, D_{i+4}$, with the indices taken mod $6$,  around  $P_j \in \bar X$ can be taken  respectively to be 
		\begin{equation*}\small{
		\left\{    
				\begin{matrix}
					x=0\\
					y=0
				\end{matrix}\right.  , \quad \quad
		 \left\{    
				\begin{matrix}
					v=0\\
					u=0
				\end{matrix}\right.  , \quad \quad
			 \left\{    
				\begin{matrix}
					x=0\\
					u=0
				\end{matrix}\right. , \quad \quad
			 \left\{    
				\begin{matrix}
					v=0\\
					y=0
				\end{matrix}\right. .}
		\end{equation*}	
	\end{enumerate}
{\rm [Note a change in  notation with respect to  \cite{RossiMIchele20174NamikawaEx}, in particular  for  $D_{i+4}$.]}
\end{lemma}

\begin{remark} Note in fact that $\forall \, i, \ 0 \leq i \leq 5$, we can write the local equation around a fixed point  $P_j \in \{P_1, \cdots, P_6\}$ of $\bar X$  as
	\begin{equation*}
		(\bar{\mathrm{x}}-\bar{\mathrm{v}}) \cdot (\bar{\mathrm{x}}-\epsilon\bar{ \mathrm{v}}) \cdot (\bar{\mathrm{x}}-\epsilon ^2\bar{\mathrm{v}})  =(\bar{\mathrm{y}}+\bar{\mathrm{u}} ) \cdot (\bar{\mathrm{y}}-\bar{\mathrm{u}} )
	\end{equation*}
	with 
	\begin{equation*}
		\left\{    
		\begin{matrix}
			y=\bar{\mathrm{y}}+ (-1)^{i} \bar{\mathrm{u}}\\
			u = \bar{\mathrm{y}} - (-1)^{i+1} \bar{\mathrm{u}} 
		\end{matrix}\right.  , \quad
		\left\{    
		\begin{matrix}
			x=\bar{\mathrm{x}} - \epsilon  ^i  \bar{ \mathrm{v}}\\
			v= \bar{\mathrm{x}} - {\epsilon}^{i+1} \bar{ \mathrm{v}}
		\end{matrix}\right. \,.
	\end{equation*}
\end{remark}

 \begin{thm}\label{RossiNamikawa}\cite{RossiMIchele20174NamikawaEx,NamikawaStratified}

The threefold  $\bar{X} \stackrel{def}=B \times _{\P^1} B'$ is a Calabi-Yau threefold, singular at $6$ points  $\{P_1, \cdots, P_6 \}$ of local analytic  equation $\bar{\mathrm{x}}^3-\bar{\mathrm{v}}^3-\bar{\mathrm{y}}^2+\bar{\mathrm{u}}^2=0$. 
\begin{enumerate}
	\item  $b_2(\bar X)=19$ and $\rho(\bar X)=19$,  where $\rho(\bar X)$ denotes the rank of the Picard group. 
\item 	The singularities are terminal and not $\Q$-factorial. 
\item  There are  $6$ 
Weil divisors $D_i,  \ 0 \leq i \leq 5 $, defined in Definition \ref{autodef}, which are not Cartier. 
\item  There exist  $6$ different small projective resolutions $\varphi_i: X_i \to \bar X, \ 0 \leq i \leq 5$. Each $X_i$ is a smooth Calabi-Yau threefold.
\item $X_i$ is obtained by the  consecutive blow up of the divisors $D_i$ and then  of the strict transform of $D_{i+1}$. The small resolution can be described using the  local equations in Lemma \ref{localequ}.

	\item   The exceptional loci of any resolution $\varphi_i: X_i \to \bar X $  are six disjoint pairs $\{\mathcal P_j^{i,A}, \mathcal P_j^{i,B}\}$, $ 1 \leq j \leq 6$, of  $\pu$s with normal bundle $\co_\pu(-1)\oplus \co_\pu(-1)$,  intersecting in one point. The threefolds $X_i$ are connected to each other by flops of the exceptional curves. 
		\item $\chi_{top}(X_i)=36$, $ h^{1,1}(X_i)=\rho( X_i)=21$, $ h^{2,1}(X_i)=3$.
\end{enumerate}
\end{thm}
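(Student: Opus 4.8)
The plan is to reconstruct the argument of Namikawa and Rossi \cite{NamikawaStratified,RossiMIchele20174NamikawaEx}, organised around the local structure of $\bar X$, the construction of the small resolutions, and the computation of invariants.

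\emph{Local structure and the Calabi--Yau property.} Because $B$ and $B'$ are built from the \emph{same} section $b\in H^0(\pu,\co_\pu(6))$, both surfaces degenerate over the same six points $t_1,\dots,t_6\in\pu$, the simple zeros of $b$; as $a_4\equiv 0$ the discriminant is $\Delta=-27\,b^2$, so $\operatorname{ord}_{t_j}\Delta=2$ and each of these fibers is of Kodaira Type $II$, and since $\deg\Delta=12$ there are no others. The Jacobian criterion applied to the Weierstrass equations shows that $B$ and $B'$ are smooth and that $B\to\pu$ (resp.\ $B'\to\pu$) is a submersion away from the cusp of each Type $II$ fiber; hence $\bar X$ is smooth except at the six points $P_j$ over $t_j$ whose images in $B$ and $B'$ are \emph{both} cusps. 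Near $P_j$, in local analytic coordinates realising $B$ as $\bar{\mathrm y}^2=\bar{\mathrm x}^3+t$ and $B'$ as $\bar{\mathrm u}^2=\bar{\mathrm v}^3+t$, eliminating $t$ identifies a neighbourhood of $P_j$ in $\bar X$ with $\{\bar{\mathrm x}^3-\bar{\mathrm v}^3-\bar{\mathrm y}^2+\bar{\mathrm u}^2=0\}\subset\C^4$, with the isolated singular point at the origin. For the Calabi--Yau property I would use the relative dualizing sheaf of a fiber product, $\omega_{\bar X}\cong p^{\ast}\omega_{B/\pu}\otimes p'^{\ast}\omega_{B'/\pu}\otimes\bar\pi^{\ast}\omega_{\pu}$; a rational elliptic surface has $\omega_{B/\pu}\cong\co_B(F)$ with $F$ a fiber, whose pullback to $\bar X$ is $\bar\pi^{\ast}\co_\pu(1)$, so $\omega_{\bar X}\cong\bar\pi^{\ast}\co_\pu(1+1-2)=\co_{\bar X}$ on the smooth locus, and this extends over the Gorenstein (hypersurface) points $P_j$. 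Finally $h^1(\co_{\bar X})=h^2(\co_{\bar X})=0$ follows from the rationality of $B,B'$ and the Leray spectral sequence of $\bar\pi$ (Schoen's analysis of fiber products, \cite{Schoen87}), and $h^3(\co_{\bar X})=1$ by Serre duality; so $\bar X$ is a Calabi--Yau threefold.

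\emph{Singularity type, Weil divisors, and small resolutions --- the crux.} One checks that a general hyperplane section through $P_j$ of $g=\bar{\mathrm x}^3-\bar{\mathrm v}^3-\bar{\mathrm y}^2+\bar{\mathrm u}^2$ has corank-one quadratic part and Milnor number $2$, hence is an $A_2$ rational double point, so $P_j$ is a compound Du Val ($cA_2$) point; by Reid's theorem it is terminal, giving the first half of~(2). Writing $g=(\bar{\mathrm y}-\bar{\mathrm u})(\bar{\mathrm y}+\bar{\mathrm u})=\prod_{k=0}^{2}(\bar{\mathrm x}-\zeta^{k}\bar{\mathrm v})$ with $\zeta=e^{2\pi i/3}$ exhibits $P_j$ as a length-three generalised conifold, so the local divisor class group is nontrivial; taking the global closures of these local Weil divisors gives the six non-$\Q$-Cartier classes $D_i$ of Lemma~\ref{auto}, which is~(3) and yields the non-$\Q$-factoriality in~(2). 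Locally, blowing up the ideal $(\bar{\mathrm y}-\bar{\mathrm u},\,\prod_{k\in S}(\bar{\mathrm x}-\zeta^{k}\bar{\mathrm v}))$ for a two-element subset $S\subset\{0,1,2\}$ is a small resolution with exceptional fiber a pair of $\pu$'s meeting transversally at one point, each with normal bundle $\co_\pu(-1)\oplus\co_\pu(-1)$; flopping either curve produces the other local choices. The delicate and crucial step is the \emph{global} one: showing that the local choices at the six $P_j$ are governed coherently by the geometry of the fiber product (the three cuspidal branches being cyclically permuted), so that there are exactly six inequivalent \emph{projective} small resolutions $\varphi_i\colon X_i\to\bar X$, pairwise connected by flops of the $(-1,-1)$-curves, and exhibiting the relatively ample class that certifies projectivity. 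I expect this global combinatorial-and-projectivity analysis to be the main obstacle, and it is exactly what \cite{NamikawaStratified,RossiMIchele20174NamikawaEx} supply. Granting it, each $\varphi_i$ is small and crepant, so $K_{X_i}=\varphi_i^{\ast}K_{\bar X}=\co_{X_i}$ and $h^{1}(\co_{X_i})=h^{2}(\co_{X_i})=0$, hence each $X_i$ is a smooth Calabi--Yau threefold: this is~(4) and~(5).

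\emph{Invariants.} For $\chi_{top}$, stratify $\bar X$ over $\pu$ and use additivity of the Euler characteristic: the generic fiber is an abelian surface (so the open part contributes $0$) and the six special fibers are products of two cuspidal cubics, each of Euler number $2\cdot2=4$, so $\chi_{top}(\bar X)=\sum_{j=1}^{6}4=24$; since $\varphi_i$ replaces each point $P_j$ by a wedge of two $\pu$'s (Euler number $3$), $\chi_{top}(X_i)=24-6+18=36$. For $b_2(\bar X)=\rho(\bar X)=19$ I would compute the Betti numbers of the fiber product directly from the Leray spectral sequence of $\bar\pi$ together with \cite{Schoen87}, using $h^2(\co_{\bar X})=0$ to identify $b_2(\bar X)$ with the Picard rank; the ensuing comparison of $\Pic(\bar X)$ and $\mathrm{Cl}(\bar X)$ is carried out in \cite{NamikawaStratified} and shows that the small resolution adjoins exactly a rank-two lattice of divisor classes, so $h^{1,1}(X_i)=\rho(X_i)=b_2(\bar X)+2=21$. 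Then $h^{2,1}(X_i)=h^{1,1}(X_i)-\tfrac12\chi_{top}(X_i)=21-18=3$; as a cross-check, $3=h^0(\co_\pu(6))-\dim\mathrm{PGL}_2-1$ equals the number of moduli of the pair $(\pu,b)$, from which all complex deformations of $X_i$ arise. Finally $h^{2,0}(X_i)=0$ forces $H^{1,1}(X_i)$ to be entirely algebraic, so $\rho(X_i)=h^{1,1}(X_i)=21$, completing~(1) and~(6).
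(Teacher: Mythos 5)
Your reconstruction covers the same ground that the paper itself delegates: Theorem \ref{RossiNamikawa} is quoted from \cite{NamikawaStratified,RossiMIchele20174NamikawaEx}, and the paper only records Definition-Lemma \ref{auto} and Lemma \ref{localequ} — the order-six automorphism $\tau_{\mathcal E}$ whose twisted diagonals define the six \emph{global} Weil divisors $D_i$, the factored local equation at each $P_j$, and the description of $X_i$ as the \emph{consecutive} small blow-up of $D^i_i$ and then $D^i_{i+1}$. Your treatment of the standard ingredients (location of the six singular points and their local equation, triviality of $\omega_{\bar X}$ via $\omega_{B/\mathbb{P}^1}\cong\co_B(F)$, the $cA_2$/terminal analysis, $\chi_{top}(\bar X)=24$ hence $\chi_{top}(X_i)=36$, and the Hodge-number bookkeeping) is consistent with the cited sources, and, like the paper, you defer the genuinely hard global statements ($b_2(\bar X)=19$, existence and projectivity of exactly six small resolutions) to Namikawa and Rossi.

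There is, however, a concrete error in your local model of the resolution. Writing the local equation as $(\bar{\mathrm y}-\bar{\mathrm u})(\bar{\mathrm y}+\bar{\mathrm u})=\prod_{k=0}^{2}(\bar{\mathrm x}-\zeta^k\bar{\mathrm v})$, the blow-up of the single ideal $(\bar{\mathrm y}-\bar{\mathrm u},\,\ell_{k_1}\ell_{k_2})$ with $\ell_k=\bar{\mathrm x}-\zeta^k\bar{\mathrm v}$ is \emph{not} a small resolution: in the chart where $\ell_{k_1}\ell_{k_2}=\beta\,(\bar{\mathrm y}-\bar{\mathrm u})$ the strict transform is the rank-four quadric hypersurface $\{\ell_{k_1}\ell_{k_2}=\beta\,(\bar{\mathrm y}-\bar{\mathrm u})\}$, so an ordinary node survives and only one exceptional $\mathbb{P}^1$ is produced. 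The correct procedure is the one in Lemma \ref{localequ}(3): two consecutive small blow-ups along Weil divisors of the form $\{\bar{\mathrm y}-\bar{\mathrm u}=\ell_k=0\}$ (i.e.\ $D^i_i$ followed by $D^i_{i+1}$), which is what yields the pair of $(-1,-1)$-curves meeting in a point. Relatedly, your parametrization of local choices by two-element subsets, together with ``global closures of the local Weil divisors,'' does not by itself produce six globally defined algebraic divisors on $\bar X$, nor the projectivity of the resulting resolutions, nor the count of exactly six: in the paper these all come from the single global, $\tau$-equivariant datum of Definition-Lemma \ref{auto}, with the $D_i$ the orbit of the diagonal, so that the local choice at each $P_j$ is made coherently. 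You acknowledge and defer this crux to the references, which is acceptable given that the paper does the same, but the one-step blow-up claim should be corrected and the role of the automorphism made explicit, since later sections of the paper (e.g.\ Proposition \ref{intersections}) use exactly that description.
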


\begin{lemma}\label{Xsing}   Let  
	$ \bar{\pi}:\textsl{}\bar X 
	\to B  
	$  be the elliptic fibration on the singular threefold induced by the projection on $B$:
	$ \bar X=  B \times _{\P^1} B'  \ \stackrel{\bar {\pi}}\to B  \ \stackrel{r}\to \pu$. 
Let $\{r^{-1}(\mathfrak{p}_j) \}_{  1 \leq j \leq 6 \ } \in B$ denote the  $6$ cuspidal fibers of $r$, $\mathfrak{p}_j \in \pu$. Let $ \{p_j \in r^{-1}(\mathfrak{p}_j) \}_{  1 \leq j \leq 6 \ }$ be the cuspidal points of  these  fibers in $B$. Then:

	\begin{enumerate} 
		
	\item 	 $\bar \pi (P_j)
		= p_j $, i.e. the image of the singular point $P_j \in \bar X$   is  the cuspidal point of 
	the singular  fiber $  r^{-1}(\mathfrak{p}_j)$,  $1\leq j \leq 6$.   
		\item  The support of the discriminant locus of  the elliptic fibration $\bar{\pi}$ is the disjoint union of the $6$ cuspidal curves $\{r^{-1}(\mathfrak{p}_j) \}_{ 1 \leq j \leq 6 \ }$.
		\item 
		All the singular fibers of $\bar{\pi}$, that is the fiber  over the points $q_j \in r^{-1}(\mathfrak{p}_j)$,  are  cuspidal curves (Kodaira type $II$).
		\item The Weil divisors $D_i$ are smooth and are rational sections of the fibration $\bar{\pi}$.	
	\end{enumerate}
\end{lemma}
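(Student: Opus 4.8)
The plan is to use that, via the first projection, $\bar\pi\colon\bar X\to B$ is nothing but the base change of the Weierstrass fibration $r'\colon B'\to\P^1$ along $r\colon B\to\P^1$: indeed $\bar X=B\times_{\P^1}B'$ and $\bar\pi=\mathrm{pr}_1$. Consequently, for every $P\in B$ the scheme-theoretic fiber is
\[
\bar\pi^{-1}(P)=\{P\}\times_{\P^1}B'\cong(r')^{-1}(r(P)),
\]
the fiber of $B'$ over $r(P)\in\P^1$; dually, the Weierstrass data of $\bar\pi$ over $B$ is $(f,g)=(0,\,r^*b)$, so its discriminant vanishes precisely on $r^{-1}(\{b=0\})$. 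With this, (2) and (3) are immediate. The fiber $\bar\pi^{-1}(P)$ is singular exactly when $(r')^{-1}(r(P))$ is, i.e.\ exactly when $r(P)\in\{\mathcalligra{p_1},\dots,\mathcalligra{p_6}\}$, the six points of $\P^1$ carrying the Type $II$ fibers of $B'$; hence the support of the discriminant of $\bar\pi$ is $r^{-1}(\{b=0\})=\bigcup_{j=1}^{6}r^{-1}(\mathcalligra{p_j})$, and these six curves are pairwise disjoint because $\mathcalligra{p_1},\dots,\mathcalligra{p_6}$ are distinct points of $\P^1$; this is (2). For (3), if $q_j\in r^{-1}(\mathcalligra{p_j})$ then $r(q_j)=\mathcalligra{p_j}$, so $\bar\pi^{-1}(q_j)\cong(r')^{-1}(\mathcalligra{p_j})$ is the Kodaira Type $II$ fiber of $B'$, a cuspidal plane cubic; note this holds for \emph{every} $q_j$ on the cuspidal curve, including the cusp $p_j$ itself (where $\bar X$ is singular, but the abstract fiber is unchanged). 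In the Weierstrass picture the Type $II$ labelling is visible over a general point of $r^{-1}(\mathcalligra{p_j})$, where $\operatorname{ord}(f)=\infty$, $\operatorname{ord}(r^*b)=1$, $\operatorname{ord}(\Delta_{\bar\pi})=2$.

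For (1) I would pass to local analytic coordinates near a cusp. Around $\mathcalligra{p_j}\in\P^1$ choose a coordinate $t$ with $b=t\cdot(\text{unit})$ -- possible since a general $b\in H^0(\P^1,\co_{\P^1}(6))$ has simple zeros -- so that in the chart $z=1$ the surface $B$ is the smooth graph $\{t=y^2-x^3\}$ in coordinates $(x,y)$, with $r=y^2-x^3$ and cusp $p_j=\{x=y=0\}$; likewise $B'$ near its cusp $p'_j$ over $\mathcalligra{p_j}$ is $\{t=u^2-v^3\}$ with $r'=u^2-v^3$ and $p'_j=\{u=v=0\}$. Since both $B$ and $B'$ are built over the same $\P^1$ with cusps over the same $\mathcalligra{p_j}$, they share this coordinate $t$, and therefore, analytically near $(p_j,p'_j)$,
\[
\bar X=B\times_{\P^1}B'=\{\,y^2-x^3=u^2-v^3\,\}=\{\,x^3-v^3-y^2+u^2=0\,\},
\]
which is exactly the local analytic equation recorded in Theorem \ref{RossiNamikawa} (equivalently Lemma \ref{localequ}(1)) and is singular at the origin $(p_j,p'_j)$. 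Since Theorem \ref{RossiNamikawa} asserts that $\bar X$ has exactly six singular points $P_1,\dots,P_6$, they must be the six singular points $(p_j,p'_j)$; labelling so that $P_j=(p_j,p'_j)$ gives $\bar\pi(P_j)=\mathrm{pr}_1(p_j,p'_j)=p_j$, the cusp of $r^{-1}(\mathcalligra{p_j})$, which is (1).

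The step requiring the most care is the local normal form used in (1), namely that near a Type $II$ fiber the elliptic surface with section is analytically $t=y^2-x^3$. Here this is not an extra input but a direct reading of the given global Weierstrass equation $y^2z=x^3+bz$ at a simple zero of $b$; the only genuine observation is the compatibility of the local coordinate $t$ between $B$ and $B'$, which is what makes the fibered product collapse to the displayed hypersurface singularity and lets us match it to the Theorem's count and normal form rather than having to prove smoothness of $\bar X$ elsewhere by hand.
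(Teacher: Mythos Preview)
Your argument is correct. The paper actually states Lemma~\ref{Xsing} without proof, treating all three parts as immediate from the fiber-product description of $\bar X$; your write-up supplies precisely the natural justification the paper leaves implicit. The key identification $\bar\pi^{-1}(P)\cong (r')^{-1}(r(P))$ instantly yields (2) and (3), and your local computation for (1) --- reading the Weierstrass equation $y^2=x^3+b$ at a simple zero of $b$ as the graph $t=y^2-x^3$, and likewise for $B'$ --- recovers the hypersurface $x^3-v^3-y^2+u^2=0$ recorded in Theorem~\ref{RossiNamikawa} and pins the singular point at the origin, so $\bar\pi(P_j)=p_j$. One small remark: your parenthetical ``$\operatorname{ord}(f)=\infty$'' is informal (since $f\equiv 0$ the Kodaira--Tate table is read by the condition $\operatorname{ord}(f)\geq 1$ together with $\operatorname{ord}(g)=1$), but this is cosmetic and does not affect the argument.
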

\begin{proof}
The statements follow from the construction and the  Lemmas \ref{auto} and \ref{localequ}.
\end{proof}
\begin{definition}\label{dik}
 Let  
$  \varphi_i : X_i  \to \bar X$ be  one of the resolutions in Theorem \ref{RossiNamikawa}, $\ 0 \leq i \leq 5.$  
 
For $0 \leq k \leq 5$, $D^i_k$ denotes the strict transform of the divisor $D_k$ by $\varphi_i$.
\end{definition}
\begin{thm}\label{MW}	 Let  
	$ \bar{\pi}:\textsl{}\bar X 
	\to B  
	$  and $X_i$ be as above and let  $ \bar\pi  \circ \varphi_i \stackrel{def }= \pi_i : X_i  \to B$ be  one of the induced elliptic fibrations, 	$\ 0 \leq i \leq 5.$  The 
	elliptic fibration  $\pi_i: X_i  \to B$  has $\rk (\mw(X_i/B))=10$. \end{thm}
\begin{proof}  The statement  follows from  
	Theorem \ref{RossiNamikawa} and from  the Tate-Shioda-Wazir Theorem \cite{Wazir}. 
	The Tate-Shioda-Wazir Theorem in fact states:\\ $\rk (\mw(X_i/B))=  \rho(X_i)-\rho(B) -1=21-10-1 =10$.
\end{proof}
The  elliptic  fibration of the  smooth Calabi-Yau threefolds is described explicitly as follows: 

\begin{thm}\label{description}
	  Let  
	$ \bar{\pi}:\textsl{}\bar X 
	\to B  
	$  and $X_i$ be as above and let  $ \bar\pi  \circ \varphi_i \stackrel{def }= \pi_i : X_i  \to B$ be  one of the induced elliptic fibrations, 	$\ 0 \leq i \leq 5.$

	\begin{enumerate}

\item $D^i_k$  is a section of the fibration $\pi_i$.
  $D^i_i$ and $D^i_{i+1}$ are independent elements of the free part of the Mordell-Weil group.
		\item  For all $i, j$, $\pi_i^{-1}(p_j)$, the fiber of $\pi_i$  over a singular point  $p_j \in B$ of the discriminant, consists of $3$ rational curves $\mathcal{P}_{j}^{i,A}, \ \mathcal{P}_{j}^{i,B}, \ \mathcal{P}_{j}^{i,0}$.
		\item $\mathcal{P}_{j}^{i,A}, \ \mathcal{P}_{j}^{i,B}, \ \mathcal{P}_{j}^{i,0}$  intersect mutually  transversely at a point (as a fiber of Kodaira type $IV$).  $\mathcal{P}_{j}^{i,0}$ is the strict transform of the cuspidal curve  $\bar{\pi}^{-1}(q_j)$; $\mathcal{P}_{j}^{i,A}$,  $\mathcal{P}_{j}^{i,B}$ are the exceptional $\pu$ for the first and the second blow up respectively.
		\item  If $q$ is a smooth point of the discriminant, $ \pi_i^{-1}(q)$ is a cuspidal curve (Kodaira type $II$).

	\end{enumerate}
\end{thm}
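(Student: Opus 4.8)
The plan is to read off (1)--(2) from the explicit description of the divisors $D_i$ in Lemma-Definition~\ref{auto} together with the local resolution of Lemma~\ref{localequ}, and to obtain (3)--(5) by a direct computation in the blow-up charts near the six points $P_j$, using that $\bar X$ has there a compound Du Val singularity of type $cA_2$. For (1): the automorphism $\tau_{\mathcal E}$ acts as the identity on the first $\P(\mathcal{E})$-factor and restricts on the second to an automorphism of $B'$ that commutes with $r'\colon B'\to\pu$ and fixes its zero-section (it sends $[0,1,0]$ to $[0,-1,0]=[0,1,0]$). Hence each power induces an isomorphism $\tau^i\colon B\xrightarrow{\sim}B'$ of rational elliptic surfaces over $\pu$, once $B$ and $B'$ are identified through their common Weierstrass form, and by construction $D_i$ is exactly the graph $\{(b,\tau^i(b)):b\in B\}\subset B\times_{\pu}B'$; therefore $\bar\pi|_{D_i}\colon D_i\to B$ is an isomorphism, so $D_i$ is a rational section. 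It is only \emph{rational} because, by Lemma~\ref{Xsing}(1) and Lemma~\ref{localequ}, each $D_i$ passes through all six singular points $P_j$ of $\bar X$ and the $D_i$ are not $\Q$-Cartier (Theorem~\ref{RossiNamikawa}(2)--(3)), so on $\bar X$ they do not give sections usable for Mordell--Weil theory.

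For (2): by Lemma~\ref{localequ}(3), near each $P_j$ the morphism $\varphi_i$ is the blow-up of $\Do$ followed by the blow-up of $\Du$, and after the coordinate change in the proof of Lemma~\ref{localequ} these are the local equations of the strict transforms of $D_i$ and $D_{i+1}$. Reading off the blow-up charts, one checks that $D^i_i$ and $D^i_{i+1}$ become Cartier (the resolution precisely removes the failure of $\Q$-factoriality along them) and that $\pi_i$ restricts to isomorphisms $D^i_i\to B$ and $D^i_{i+1}\to B$, meeting each fibre at a smooth point; hence they are honest sections of $\pi_i$. For the claim that they are independent and generate the free part: each of $B,B'$ has only irreducible fibres, so by Shioda--Tate $\rk\MW(B'/\pu)=8$ with Mordell--Weil lattice $E_8$; the subgroup of $\MW(X_i/B)$ generated by the differences $D^i_k-(\text{zero section})$, $0\le k\le 5$, is stable under the order-$6$ action induced by $\tau$ and has rank $2$, complementary to the pulled-back $E_8$, so $\rk\MW(X_i/B)=10$ with $D^i_i,D^i_{i+1}$ a basis of the extra rank-$2$ part; non-degeneracy is then confirmed by the explicit height-pairing computation of Section~\ref{ShiodaHeightP}.

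For (3)--(5): near $P_j$ the equation $\bar{\mathrm{x}}^3-\bar{\mathrm{v}}^3-\bar{\mathrm{y}}^2+\bar{\mathrm{u}}^2=0$ of Theorem~\ref{RossiNamikawa}, written as in the proof of Lemma~\ref{localequ} in the form $(\bar{\mathrm{x}}-\bar{\mathrm{v}})(\bar{\mathrm{x}}-\epsilon\bar{\mathrm{v}})(\bar{\mathrm{x}}-\epsilon^2\bar{\mathrm{v}})=(\bar{\mathrm{y}}+\bar{\mathrm{u}})(\bar{\mathrm{y}}-\bar{\mathrm{u}})$, exhibits $\bar X$ as a $cA_2$ singularity, and the composite $\mathrm{Bl}_{\Du}\circ\mathrm{Bl}_{\Do}$ is one of its small resolutions, with exceptional locus over $P_j$ the chain $\mathcal{P}_{j}^{i,A}\cup\mathcal{P}_{j}^{i,B}$ of two $(-1,-1)$-curves meeting in one point (Theorem~\ref{RossiNamikawa}(5)). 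Since $\bar\pi(P_j)=p_j$ (Lemma~\ref{Xsing}(1)), these curves lie in the fibre $\pi_i^{-1}(p_j)$; since $\bar\pi^{-1}(p_j)$ is an irreducible cuspidal cubic whose cusp is $P_j$ (Lemma~\ref{Xsing}(3)), the two blow-ups resolve that cusp and its strict transform $\mathcal{P}_{j}^{i,0}$ is a smooth rational curve, so $\pi_i^{-1}(p_j)=\mathcal{P}_{j}^{i,A}\cup\mathcal{P}_{j}^{i,B}\cup\mathcal{P}_{j}^{i,0}$, which is (3). Inspecting the two blow-up charts shows that all three of these $\P^1$'s pass through the single point lying over $P_j$ after both blow-ups, with pairwise transverse branches there; together with the fact that each appears with multiplicity $1$ and that the discriminant order of $\pi_i$ jumps from $2$ along the cuspidal curve $r^{-1}(\mathcalligra{p_j})$ to $4$ at $p_j$, this pins the fibre down as Kodaira type $IV$ rather than $I_3$, which is (4). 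Finally, if $q_j\neq p_j$ then $q_j$ is a smooth point of $\bar X$ and $\bar\pi^{-1}(q_j)$ is disjoint from $\{P_1,\dots,P_6\}$, so $\varphi_i$ is an isomorphism over a neighbourhood of it and $\pi_i^{-1}(q_j)\cong\bar\pi^{-1}(q_j)$ is a cuspidal cubic of type $II$ by Lemma~\ref{Xsing}(3), which is (5).

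\textbf{Main obstacle.} I expect the technical heart to be the chart-by-chart computation behind (4): verifying that the strict transform $\mathcal{P}_{j}^{i,0}$ of the cuspidal cubic meets the exceptional chain precisely at the common point of $\mathcal{P}_{j}^{i,A}$ and $\mathcal{P}_{j}^{i,B}$, and that all three intersections there are transverse, so that the configuration is a type $IV$ fibre (three lines through one point) and not a type $I_3$ triangle. This forces one to keep careful track, for the $i$-th resolution, of which of the three linear factors $\bar{\mathrm{x}}-\epsilon^k\bar{\mathrm{v}}$ and which of $\bar{\mathrm{y}}\pm\bar{\mathrm{u}}$ are identified with $x,v,y,u$ in the local model of Lemma~\ref{localequ}, and of the order in which $\Do$ and $\Du$ are blown up.
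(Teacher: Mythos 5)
Your route is the same as the paper's (the paper simply invokes Definition-Lemma~\ref{auto} and Lemmas~\ref{localequ}, \ref{Xsing} for (1), (2), (5), and a ``direct computation'' from Lemma~\ref{localequ} for (3)--(4)), and your expansion of (1) and (3)--(5) is in the right spirit: the graph description of $D_i$ via $\tau$, and the chart analysis of the two-step small resolution of the $cA_2$ point $yu=xv\big((1+\epsilon)v-\epsilon x\big)$, with the concurrency/transversality of $\mathcal{P}_{j}^{i,A},\mathcal{P}_{j}^{i,B},\mathcal{P}_{j}^{i,0}$ as the criterion for Type $IV$ (your side remark about ``discriminant order jumping from $2$ to $4$'' is not a well-defined criterion over a two-dimensional base and should be dropped; the concurrency computation is the actual content).

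There is, however, a genuine error in your justification of (2): the step ``reading off the blow-up charts, one checks \dots that $\pi_i$ restricts to isomorphisms $D^i_i\to B$ and $D^i_{i+1}\to B$, meeting each fibre at a smooth point'' fails if you actually do the chart computation. Blowing up $\Do=\{x=y=0\}$ in the local model, in the chart $y=sx$ the strict transform of $D_i$ is the quadric cone $\{x=0,\ (1+\epsilon)v^2=su\}$, which contains the first exceptional curve $\{x=v=u=0\}$; after the second blow-up along $\Du$ (chart $u=vc$) it becomes $\{x=0,\ (1+\epsilon)v=sc\}$, which contains both exceptional curves. Hence $D^i_i$ contains the fibral curves $\mathcal{P}_{j}^{i,A}$ (and $\mathcal{P}_{j}^{i,B}$), and $\pi_i|_{D^i_i}$ contracts them over each $p_j$, so it is not an isomorphism onto $B$: $D^i_i$ is only a birational (rational) section. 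This is forced by the paper's own Proposition~\ref{intersections}: $D^i_i\cdot\mathcal{P}_{j}^{i,A}=-1<0$ implies $\mathcal{P}_{j}^{i,A}\subset D^i_i$, which is incompatible with $D^i_i$ meeting $\pi_i^{-1}(p_j)$ in a single point (only the total class satisfies $D^i_i\cdot\E=1$). So the wording ``sections'' in (2) must be taken in the Mordell--Weil sense (the generic fibre only), and your argument should be rephrased accordingly: $\Do,\Du$ define elements of $\mw(X_i/B)$, independent of the pulled-back $S_l$ precisely because they meet the exceptional fibral curves nontrivially while the $S_l$ do not. Your rank-$2$ argument via the $\Z[\zeta_6]$-module generated by the tautological point is a nice alternative to the paper's implicit appeal to Shioda--Tate and the later intersection table, but as written it still needs the non-torsion of the tautological point and an argument that this rank-$2$ piece meets the pulled-back $E_8$ trivially, for which the intersection numbers with $\mathcal{P}_{j}^{i,A},\mathcal{P}_{j}^{i,B}$ are again the cleanest tool.
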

\begin{proof}  	(4) follows from Lemma \ref{Xsing}.
	Theorem \ref{RossiNamikawa},  Lemmas  \ref{Xsing},  \ref{localequ} and  \ref{auto}  provide   the local equations around each singular point as well as the geometric description of  the singular Calabi-Yau and a resolution.  We then can  write  the local equations of the smooth Calabi-Yau,   and of $\mathcal{P}_{j}^{i,A}, \ \mathcal{P}_{j}^{i,B}, \ \mathcal{P}_{j}^{i,0}$. 

	(1) follows from the analysis of these local equations and from Theorem \ref{MW}.
	  A direct computation  in the local equations  proves (2) and (3).
The linear independence of the sections $D^i_i$ and $D^i_{i+1}$ can also be checked explicitly from the intersection numbers in Proposition \ref{intersections}.
	\end{proof}

The explicit description of the fibration in  Theorem \ref{description}  gives  directly $\chi_{top}(X_i)=36$.

\begin{remark}

In the Namikawa examples studied, both elliptic rational surfaces $B$ and $B'$ in the fiber product $\bar{X} =B \times _{\P^1} B'$ are engineered to have six Type $II$ fibers over the same points, which leads to $6$ isolated  singular points in $\bar X$.
 The resulting high Mordell-Weil rank of ten   $\mw(\bar{X}/B)$ is a consequence of the fact that the $6$ singular points are non $\mathbb Q$-factorial and that there are no other $\Q$-factorial singularities.
The resolutions produce  two additional independent curve classes in the fiber of the resolved threefold $X_i$, and no (Weil) divisor. Hence the Mordell-Weil group of the resolved threefold $X_i$ is generated by the eight generators present also on a generic Schoen manifold (with $B \neq B'$ general rational elliptic surfaces
), together with 
 two more generators associated with two independent rational sections dual to the two additional fibral curve classes from the resolution (in  the Type $IV$ fibers). This is to be compared with the special threefolds studied explicitly in \cite{Schoen87,Morrison:2016lix}  
 with a Mordell-Weil rank of 9: There, $B$ and $B'$ have $I_1$ fibers  over the same $12$ points, which leads to $12$ isolated  non $\mathbb Q$-factorial singular points in $\bar X$.  But the  resolution
 gives rise to one extra curve class in the fiber, leading to $8+1=9$ independent generators of the Mordell-Weil group.
  We believe that the collision of six Type $II$ fibers in the Namikawa threefold
  gives rise to the maximal possible number of independent curve classes in the fiber without inducing a singularity in codimension one, whose resolution would subtract from the Mordell-Weil group.

\end{remark}

\section{The Geometry of the spectrum}\label{sec:GeomSpec}

The  geometry of the Calabi-Yau and its invariants are directly related to the massless particle spectrum. We review the correspondence in  Section  \ref{sec_Anomalies}.  

To define the dictionary between the Spectrum and the geometry, to evaluate the spectrum, 
 the gravitational and gauge anomalies in physics, and the corresponding  formula (\ref{genconj})  in geometry, we need to determine the pairing between $H^2(X_i, \Z) $  and $H_2(X_i, \Z)$, the Shioda map, the height pairings and other  geometric invariants of the Calabi-Yau.

\subsection{Cohomology, homology, pairings, Gopakumar-Vafa invariants
}\label{sec_Cohom}~

From now on we fix a smooth resolution $X_i$ as in Theorem \ref{description} and Theorem \ref{MW} and an index $i$.

\begin{definition}\label{PicB} 

	\begin{enumerate}[label=(\roman*)]
	\item 	Let $f$ and $\mathsf{s}_k, \ 0 \leq k \leq 8$, respectively denote the classes of the fiber, the zero-section and the  generators  of the Mordell-Weil group $\mw({B} / \pu)$; they form  a basis of $H_2(B) $.  
		\item 	Similarly, let $\mathsf{s'}_l, \ 0 \leq l \leq 8$ denote the classes of the  linearly independent sections  of $r': B' \to \pu$  in $H_2(B') $.  
\end{enumerate}	 
\end{definition}

\begin{definition}\label{MWX} \label{9sections}~

 Let $S_l \stackrel{def}= ({\bar \pi'})^*(\mathsf{s'}_l)$, $ \ 0 \leq l \leq 8$, where $\bar \pi' : \bar X = B \times _{\P^1} B' \to B'$.
 We also denote by $S_l$ its isomorphic image  in $X_i$.

\end{definition}

We take  $S_0$  to be  the zero section; the sections  $\{S_1, \cdots, S_8\}$ are independent generators of the Mordell-Weil group $\mw({\bar X} / B)$. 
$S_0$ is then  the  zero section  of the Mordell-Weil group $\mw(X_i/B)$ and  $\{S_1, \cdots, S_8, \Do, \Du \}$  are  independent sections, by  
Lemma  \ref{localequ}.

\begin{definition}\label{NEbasis} Let $\E$ denote the class of the fiber of $\pi_i$, \\
$\skh= S_0 \cdot \p1^*(\mathsf{s}_k),  \ 0 \leq  k \leq 8$,  \\
$ \ \fh= S_0 \cdot \F$,  and \\
$\lkh= S_l \cdot \p1^*(\mathsf{s}_0)$, $\ 1 \leq l \leq 8  $ .
\end{definition}
 We can then conclude:
\begin{proposition}\label{prop-basis}
Fix any index $i$, $0\leq i \leq 5$ and  $j$, $1\leq j \leq 6$. 
With the notation as in Theorem \ref{description} and Definitions \ref{PicB}, \ref{MWX} and \ref{NEbasis}:
		\begin{enumerate}
		\item  $\{{\pi_i}^*(f),  \ {\pi_i}^*(\sfs_k),  \ S_l, \ \Do, \ \Du \}$, with $ \ 0 \leq  k \leq 8, \ 0 \leq l \leq 8$,  is a basis  of the Neron-Severi group  $NS(X_i) \simeq H^2(X_i, \Z) \simeq c_1(Pic(X_i))
		$.
	\item $\{\E, \  \fh ,\   \skh, \  \lkh , \ \PA, \ \PB\}$, $ \ 0 \leq  k \leq 8, \ 1 \leq l \leq 8  $  	is a basis  of  $H_2(X_i, \Z)$.
\end{enumerate}
\end{proposition}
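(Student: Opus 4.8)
The plan is to establish the two claimed bases by a dimension count together with unimodularity of the intersection pairing, reducing everything to data already assembled in Theorem~\ref{RossiNamikawa}, Theorem~\ref{description} and the Tate--Shioda isomorphism of Theorem~\ref{MW}. First I would record that $h^{1,1}(X_i)=\rho(X_i)=21$ by Theorem~\ref{RossiNamikawa}(6), and that $X_i$ is a smooth Calabi--Yau threefold, so $H^2(X_i,\Z)$ is torsion-free (simply connected, since $X_i$ is a small resolution of a simply connected Schoen variety), giving $H^2(X_i,\Z)\simeq\NS(X_i)\simeq c_1(\Pic(X_i))$ of rank $21$. For part (1) I would then argue that the proposed set $\{{\pi_i}^*(f),\,{\pi_i}^*(\mathsf{s}_k),\,S_l,\,\Do,\,\Du\}$ has exactly $1+9+8+1+1=21$ elements, so it suffices to show it is linearly independent, equivalently that it spans a finite-index sublattice of $\NS(X_i)$. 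The classes ${\pi_i}^*(f)$ and ${\pi_i}^*(\mathsf{s}_k)$ ($0\le k\le 8$) are the pullbacks of a basis of $\NS(B)$ (the rational elliptic surface $B$ has $\rho(B)=10$, spanned by $f$ and nine independent sections $\mathsf{s}_k$ by Definition~\ref{PicB}), and these $10$ pullback classes are independent because ${\pi_i}$ is surjective with connected fibers. The remaining $11$ classes are the images under the Shioda map of the Mordell-Weil generators: by the discussion after Definition~\ref{MWX}, $\{S_0,\dots,S_8,\Do,\Du\}$ generate $\mw(X_i/B)$, which has rank $10$, together with the zero-section $S_0$ being used to define the Shioda embedding --- so modulo the vertical part these contribute $10$ further independent classes, and the Shioda--Tate decomposition $\NS(X_i)_\Q \simeq {\pi_i}^*\NS(B)_\Q \oplus (\text{fibral}) \oplus \mw(X_i/B)_\Q$ forces independence of all $21$. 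One must be slightly careful that there are \emph{no} extra fibral divisor classes: this is exactly the content of the remark following Theorem~\ref{MW}, namely the Type~$IV$ fibers sit over points $p_j$ (codimension two in $B$), not over curves, so they contribute no vertical divisor, only fibral curve classes; hence the Shioda--Tate count closes at $10+10+1=21$ with no slack, and the proposed set is a $\Q$-basis, hence (by the index/unimodularity argument below, or simply by being $21$ independent integral classes in a rank-$21$ lattice whose quotient is checked torsion-free) an integral basis.

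For part (2) the strategy is dual. I would compute the intersection matrix $M$ pairing the $21$ divisor classes of part~(1) against the $21$ curve classes $\{\E,\,\fh,\,\skh,\,\lkh,\,\PA,\,\PB\}$ of Definition~\ref{NEbasis}, and show $\det M=\pm1$; since part~(1) gives a basis of $H^2(X_i,\Z)$ and Poincar\'e duality identifies $H_2(X_i,\Z)$ with the dual lattice $\Hom(H^2(X_i,\Z),\Z)$ (torsion-free as noted), a set of $21$ curve classes is a basis iff its pairing matrix against a divisor basis is unimodular. The entries are all computable from the explicit fibration structure: $\E$ is the fiber class of ${\pi_i}$, so ${\pi_i}^*(f)\cdot\E=0$, ${\pi_i}^*(\mathsf{s}_k)\cdot\E=0$, $S_l\cdot\E=1$, $\Do\cdot\E=\Du\cdot\E=1$ (sections meet the fiber once); $\fh=S_0\cdot{\pi_i}^*(f)$ and $\skh=S_0\cdot{\pi_i}^*(\mathsf{s}_k)$ are curves inside the section $S_0\cong B$, so their pairings with ${\pi_i}^*(\cdot)$ reproduce the intersection form of $B$ pulled back through $S_0$, while their pairings with $S_l$, $\Do$, $\Du$ reduce to intersection numbers of sections restricted to $S_0$; the curves $\lkh=S_l\cdot{\pi_i}^*(\mathsf{s}_0)$ are handled symmetrically; and $\PA=\mathcal{P}_j^{i,A}$, $\PB=\mathcal{P}_j^{i,B}$ are the two exceptional $\P^1$'s of the small resolution (Theorem~\ref{RossiNamikawa}(5), Theorem~\ref{description}(3)--(4)) with normal bundle $\co(-1)\oplus\co(-1)$, meeting $\Do$, $\Du$ (the two sections through the resolved locus) in a controlled way and meeting all the other divisor classes trivially because they lie over a point and away from the generic fiber. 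Arranging the divisor basis and curve basis in a compatible order, $M$ becomes block-triangular: a $\{\E,S_l,\Do,\Du\}$-vs-$\{S_l,\Do,\Du,\E\}$-type block encoding the ``section meets fiber once'' relations, the $\{{\pi_i}^*f,{\pi_i}^*\mathsf{s}_k\}$-vs-$\{\fh,\skh\}$ block which is (a matrix unimodularly equivalent to) the intersection matrix of the rational elliptic surface $B$ --- itself unimodular since $\NS(B)$ is an odd unimodular lattice of rank $10$ --- and a $2\times2$ block from $\{\Do,\Du\}$-vs-$\{\PA,\PB\}$ coming from the resolution geometry. Each diagonal block is unimodular, so $\det M=\pm1$ and part~(2) follows.

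The main obstacle I anticipate is \emph{not} the dimension count but the honest verification that the chosen $21$ curve classes actually pair unimodularly against the chosen $21$ divisor classes --- in particular pinning down the mixed entries such as $S_0\cdot{\pi_i}^*(f)\cdot S_l$, $S_0\cdot{\pi_i}^*(\mathsf{s}_0)\cdot S_l$, and the intersections of $\PA,\PB$ with $\Do$ and $\Du$, and checking there is no hidden contribution of $\PA,\PB$ to pairings with $S_0\cdot{\pi_i}^*(\text{anything})$. These require working in the local model of Lemma~\ref{localequ}(3) (the consecutive blow-ups of $\Do$ then $\Du$) to see exactly how the exceptional curves sit relative to the strict transforms of the sections, and globally they require knowing the intersection form of $B$ and which nine sections $\mathsf{s}_k$ (resp.\ $\mathsf{s}'_l$) one picks. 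This is precisely why the paper defers the details to Sections~\ref{QCI} and~\ref{ShiodaHeightP}; in the proof here I would carry out the block decomposition above, cite the intersection theory of rational elliptic surfaces for the $B$-block, cite Theorem~\ref{RossiNamikawa}(5) and Lemma~\ref{localequ} for the exceptional-curve block, and leave the remaining explicit intersection numbers --- which only contribute to off-diagonal blocks that do not affect $\det M$ once the diagonal blocks are unimodular --- to be tabulated there.
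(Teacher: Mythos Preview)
The paper itself supplies no argument for this proposition: it is simply prefaced by ``We can then conclude'' and treated as an immediate consequence of Theorems~\ref{RossiNamikawa}, \ref{description}, \ref{MW} and the preceding definitions. Your plan --- Shioda--Tate/Wazir for the divisor side, then unimodularity of the intersection pairing for the curve side --- is a correct and considerably more explicit route than anything the paper offers, and in fact your block-triangular observation (only $\Do,\Du$ pair nontrivially with $\PA,\PB$, by the table in Proposition~\ref{intersections}) is exactly the right reduction.

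There is, however, a genuine counting slip you should not glide over. You write that the listed divisor set has ``$1+9+8+1+1=21$ elements''; but $1+9+8+1+1=20$. The statement of the proposition, as written, takes $S_l$ only for $1\le l\le 8$, so the zero-section $S_0$ is absent and the list really has $20$ members. Your own Shioda--Tate bookkeeping ($10$ vertical $+$ $1$ zero-section $+$ $10$ Mordell--Weil $=21$) is correct and makes this visible: the decomposition needs $S_0$ in order to close. This is almost certainly a typo in the paper (note that part~(2), with $\E,\fh,\skh,\lkh,\PA,\PB$, does list $21$ curve classes, and the intersection table of Proposition~\ref{intersections} explicitly includes a row for $S_0$), but your proof plan should flag the discrepancy rather than absorb it into an arithmetic error. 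Once $S_0$ is restored, your argument that the $21\times21$ pairing matrix is block-triangular with unimodular diagonal blocks (the $2\times2$ block $-I_2$ for $\{\Do,\Du\}$ against $\{\PA,\PB\}$, and the remaining $19\times19$ block governed by the unimodular N\'eron--Severi lattice of the rational elliptic surface) goes through, establishing simultaneously that both the divisor set and the curve set are integral bases.
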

 
\smallskip


\newpage

\begin{proposition}\label{intersections}  Fix any index $i$, $0\leq i \leq 5$, and  $j$, $1\leq j \leq 6$. 
	With the notation as in Theorem \ref{description} and in Definitions \ref{dik}, \ref{PicB}, \ref{MWX} and \ref{NEbasis}, we find  the following intersection numbers:

	\begin{table}[ht!]
		\begin{center}
			\begin{tabular}{|l|c |c |c |c|c|c|c|c|c|c|} 
				\hline
				& $\E$ &  $ \fh$&  ${\hat{\mathsf{s}}_{0}}$ &  $\skh$ &  ${\hat{\mathsf{s}}_{k'}}$ &  $\lkh$ &  ${\hat{\mathsf{\ell}}_{l'}}$ & $\PA$ &  $ \PB$ &$\mathcal{P}_{j}^{i,0}$ \\\hline  
				$\pi_i^*(f)$  & $0$  &   $0$   &$1$  &$1$ & $1$  & $1$   & $1$ &$0$ &   $0$ &$0$\\\hline  
				$\pi_i^*(\sfs_{0})$ &  $0$  &$1$   &   $-1$     & $0$& ${ 0} $&  $-1$ & $-1 $ & ${0} $& ${0} $& $0$\\\hline  
				$\pi_i^*(\sfs_k)$    &$0$   &   $1$ &   $0$   & $-1 $& ${ 0}$ & ${0} $& ${0} $& ${0} $ & ${0} $& $0$\\\hline 
				$S_{0}$  &  $1$ & $0$ &    $-1 $   & $-1 $    & $-1 $  & ${ 0}$ &${ 0}$&${ 0}$ & ${ 0}$ &  $1$ \\\hline  
				$S_l$ &  $1$ & $0$ & ${ 0}$&${ 0}$ & ${ 0}$ &${ -1}$ &${ 0}$&${ 0}$ & ${ 0}$ & $1$   \\\hline  
				$\Do$ & $1$ &$1$  &$-1$   & ${ 0}$&${ 0}$ & ${ 0}$ & ${ 0}$ &  $-1 $& ${ 0}$ & $2$   \\\hline  
				$\Du$  & $1$ &$1$  &$-1$   & ${ 0}$&${ 0}$ & ${ 0}$ & ${ 0}$ &  $0$& ${ -1}$ & $2$   \\\hline\hline
				$\D3$ &   $1$ &$1$  &$-1$   & ${ 0}$&${ 0}$ & ${ 0}$ & ${ 0}$ &  $1$& ${ 0}$ & $0  $    \\\hline  
				$\Dt$   & $1$ &$1$  &$-1$   & ${ 0}$&${ 0}$ & ${ 0}$ & ${ 0}$ &  $0$& ${ 1}$ & $0$  \\ \hline  
			\end{tabular}
		\end{center}
	\end{table}
In the table, $ k \neq k'$, $1 \leq k, k' \leq 8$ and $ l \neq l'$, $1 \leq l, l' \leq 8$. Above the  double line there are generators of $NS(X_i)$; we will need also the intersections below the double line.

\end{proposition}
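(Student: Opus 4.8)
The plan is to compute all the listed intersection numbers directly from the explicit geometry set up in Lemma \ref{localequ} and Theorem \ref{description}, exploiting the fact that $X_i$ is obtained from $\bar X$ by two successive blow-ups whose local equations are known. First I would organize the computation into the ``bulk'' part (columns $\E$, $\fh$, $\skh$, $\lkh$, and their primed variants) and the ``fibral'' part (the last three columns $\PA$, $\PB$, $\mathcal P_j^{i,0}$), since these are governed by quite different mechanisms.

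\textbf{The bulk intersections.} Away from the six singular points $P_j$, the resolution $\varphi_i$ is an isomorphism, so these numbers are computed on $\bar X = B\times_{\P^1}B'$, or even pulled back from the two rational elliptic surfaces. The curves $\skh = S_0\cdot\pi_i^*(\mathsf s_k)$ and $\lkh=S_l\cdot\pi_i^*(\mathsf s_0)$ are, up to strict transform, curves lying in the zero-section $S_0\cong B$ or in the section $S_l$; intersecting them with $\pi_i^*(f)$, $\pi_i^*(s_k)$, and the various sections reduces to intersection theory on $B$ and $B'$ together with the projection formula. Concretely, $\pi_i^*(f)\cdot\skh = f\cdot_B \mathsf s_k = 1$, $\pi_i^*(s_k)\cdot\skh$ involves $\mathsf s_k\cdot_B\mathsf s_k = \mathsf s_k^2 = -1$ on the rational elliptic surface $B$ (and $=0$ when the labels differ because the sections $\mathsf s_k$ can be chosen disjoint), and $S_0\cdot\skh = \mathsf s_0\cdot_B\mathsf s_k$, etc. The rows for $\Do,\Du,\D3,\Dt$ intersected with bulk curves agree with the corresponding row for some $S_l$ or $S_0$ away from the $P_j$, because $D_i^i$ etc.\ are strict transforms of the sections $D_i$ which, as rational sections of $\bar\pi$, meet a generic fiber once; the entries $1,1,-1,0,0,0,0$ then follow from the fact that $D_k\cdot_{\bar X}(\text{bulk curve})$ can be read off from $D_0$ (the diagonal) by the automorphism $\tau$ of Definition-Lemma \ref{auto}, which permutes the $D_k$ and fixes the classes pulled back from $B$.

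\textbf{The fibral intersections.} This is where I expect the real work, and the main obstacle, to lie. Over each $p_j$ the fiber $\pi_i^{-1}(p_j)$ is a Type $IV$ configuration $\mathcal P_j^{i,A}\cup\mathcal P_j^{i,B}\cup\mathcal P_j^{i,0}$ with the three components meeting pairwise at one point. Using the local equations $xv[(1+\epsilon v)-\epsilon x]=yu$ and the description of $X_i$ as the blow-up of $\Do=\{x=y=0\}$ followed by $\Du=\{v=u=0\}$, one identifies $\mathcal P_j^{i,A}$ as the exceptional $\P^1$ of the first blow-up and $\mathcal P_j^{i,B}$ as that of the second, and $\mathcal P_j^{i,0}$ as the strict transform of the cuspidal curve $\bar\pi^{-1}(q_j)$. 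I would then compute, in the local charts of the two blow-ups, the intersection of each exceptional curve with the strict transforms of the six divisors $\Do,\Du,\D3,\Dt$ (whose local equations are given in Lemma \ref{localequ}(2)): e.g.\ $\Do$ is the center of the first blow-up so $\Do^i\cdot\mathcal P_j^{i,A}=-1$ (it becomes the negative of the exceptional class there) while $\D3=\{x=u=0\}$ and $\Dt=\{v=y=0\}$ pass through the center transversally in one point and so meet $\mathcal P_j^{i,A}$ once, giving $+1$; symmetric statements hold for the second blow-up and $\Du,\mathcal P_j^{i,B}$. For $\mathcal P_j^{i,0}$ the entries $S_0\cdot\mathcal P_j^{i,0}=S_l\cdot\mathcal P_j^{i,0}=1$ express that each section meets the strict transform of the Type $II$ fiber once, and $\Do\cdot\mathcal P_j^{i,0}=\Du\cdot\mathcal P_j^{i,0}=2$ follows by adding the contribution from the section part and the exceptional part (or, cleanly, from $\Do=S_0+(\text{something involving }\mathcal P_j^{i,A})$ in $H^2$ plus the Type $IV$ relation $\E=\mathcal P_j^{i,A}+\mathcal P_j^{i,B}+\mathcal P_j^{i,0}$). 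Finally the vanishing entries $\pi_i^*(f)\cdot\mathcal P_j^{i,\bullet}=\pi_i^*(s_k)\cdot\mathcal P_j^{i,\bullet}=0$ are immediate because the fibral curves lie in a single fiber over $B$, hence have zero intersection with any pullback from $B$, and $\E\cdot(\text{anything fibral})=0$ since $\E$ is itself a fiber class.

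\textbf{Consistency checks.} To guard against sign and normalization errors I would impose two internal constraints: (i) the linear equivalence $\E \sim \mathcal P_j^{i,A}+\mathcal P_j^{i,B}+\mathcal P_j^{i,0}$ in each Type $IV$ fiber must be compatible with every row of the table (so the last three columns of each row must sum to the $\E$-entry), and (ii) the two flopped phases $X_i$ and $X_{i+3}$ (exchanging the roles encoded by the $\tau^3$-shift) must swap $\mathcal P_j^{i,A}\leftrightarrow\mathcal P_j^{i,B}$ consistently with the $D_i^i\leftrightarrow D_{i+3}^i$ rows, which is exactly the pattern visible in comparing the first group of rows with the rows $\D3,\Dt$ below the double line. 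These checks, together with the requirement that the Shioda-mapped generators have the correct height pairing (used later in Section \ref{ShiodaHeightP}), pin down every entry uniquely, so the proof reduces to the local blow-up computations sketched above.
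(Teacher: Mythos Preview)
Your approach is essentially the same as the paper's: the bulk columns are handled by the projection formula and intersection theory on $B$ and $B'$, while the fibral columns (the paper's items (1)--(8)) come from a careful local analysis of the two successive blow-ups using the equations of Lemma~\ref{localequ}, with Lemma~\ref{Xsing} supplying the $S_k$-rows. One small slip in your sketch: it is \emph{not} true that both $\D3$ and $\Dt$ meet $\PA$ once --- the table records $\Dt\cdot\PA=0$ and $\Dt\cdot\PB=1$, so the two divisors $\D3=\{x=u=0\}$ and $\Dt=\{v=y=0\}$ are distinguished by which of the two exceptional curves they hit, and your consistency check (i) (summing the last three columns to the $\E$-entry) would have flagged this.
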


\noindent Note that 	$D^i_i \cdot ( \mathcal{P}_{j}^{i,0}+  \mathcal{P}_{j}^{i,A} + \mathcal{P}_{j}^{i,B}     )=D^i_i \cdot \E=1$ as it should be for a section and a fiber (similarly for $D^i_{i+1}$).

\begin{proof}  We need to verify the following intersections:
		\begin{enumerate}
		\item 	$D^i_i \cdot \ \mathcal{P}_{j}^{i,0}=D^i_{i+1} \cdot \ \mathcal{P}_{j}^{i,0}=2 $, 
		\item $D^i_i \cdot \ \mathcal{P}_{j}^{i,A}=D^i_{i+1} \cdot \ \mathcal{P}_{j}^{i,B}=-1$,
		\item  $D^i_i \cdot \ \mathcal{P}_{j}^{i,B}=D^i_{i+1} \cdot \ \mathcal{P}_{j}^{i,A}=0$,
		\item 	$\Dt \cdot \ \mathcal{P}_{j}^{i,0}=D^i_{i+3} \cdot \ \mathcal{P}_{j}^{i,0}=0 $, 
		\item $\Dt \cdot \ \mathcal{P}_{j}^{i,A}=D^i_{i+3} \cdot \ \mathcal{P}_{j}^{i,B}=0$,
		\item  $\Dt \cdot \ \mathcal{P}_{j}^{i,B}=D^i_{i+3} \cdot \ \mathcal{P}_{j}^{i,A}=1$,
		\item $S_k \cdot  \mathcal{P}_{j}^{i,0} =1$, $ \ 0\leq k \leq 8$,
		\item  $S_k \cdot \mathcal{P}_{j}^{i,A}= 0$, $S_k \cdot  \mathcal{P}_{j}^{i,B}=0$, $ \ 0\leq k \leq 8$.
	\end{enumerate} 
(7) and	(8) follow from Lemma \ref{Xsing}. 
Theorem \ref{RossiNamikawa} and  Lemma \ref{localequ} provide the geometric description  and the local equations around each singular point and of $D_i$, $D_{i+1}$, $D_{i+3}$, $D_{i+4} $.  We then can  write  the local equations of the smooth Calabi-Yau, of $\Do, \Du,D^i_{i+3}, \Dt$. For illustration, we exemplify (1), (2), (3) in Appendix \ref{App_intersection}.
(4), (5) and (6) follows from a similar analysis of  these local equations. We note  also that in a neighborhood of  the resolutions of each singular point  $\Do \cap \Du= \mathcal{P}_{j}^{i,A}\cup \mathcal{P}_{j}^{i,B}$

\end{proof}

In  Section \ref{ShiodaHeightP} we verify the  cancellation of the abelian anomalies with the Shioda-map and height pairings.  To that end, we need to describe the intersections of the elements in $NS(X_i)$.

\begin{proposition}\label{lDoDo}  With the same hypothesis as in Proposition \ref{prop-basis}:
	\begin{enumerate}
		\item $S_k \cdot S_k= -\fh$  \, $ \forall k$.
	\item $ S_0\cdot \Do= S_0 \cdot \Du= S_0 \cdot \D3= S_0 \cdot \Dt= \soh $.		
		\item $S_k \cdot \Do= S_k \cdot \Du= S_k \cdot \D3= S_k \cdot \Dt=\mathcal F_k$   is a section of the abelian fibration $X_i \to \pu$ such that ${\pi_{i}}_*(\mathcal F_k)= \sk$.
		\item $D^i_i \cdot \Du  = \soh + \sum_j(\PA+ \PB)$.
			\item $D^i_i \cdot 	\D3= \soh + \hat{\mathcal{C}}$ and $ \quad \Du \cdot 	\Dt= \soh + \hat{\mathcal{C}}$.\\
		$\mathcal{C}={\pi_i}_*(\hat {\mathcal{C}})$ is a smooth curve of genus $4$ such that $[ {\mathcal{C}}]^2=9$, $\mathcal{C} \cdot \sfs_0= 0$ and $\mathcal{C} \cdot f= 3$.
		\item 	$D^i_i \cdot D^i_i  ={2{\pi_{i}}^*(f) \cdot \Do
			+ 3 \soh -\hat {\mathcal{C}}}$.
		\item 	$\Du \cdot \Du  ={2{\pi_{i}}^*(f) \cdot \Du
			+ 3 \soh -\hat {\mathcal{C}}}$.
			\end{enumerate}
In homology: $[{\mathcal{C}}]= [3 \sfs_0 +f)]$. 
\end{proposition}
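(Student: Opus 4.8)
The plan is to obtain all seven identities (and the concluding homological statement) from three ingredients. First, adjunction on the Calabi--Yau threefold $X_i$: for a smooth divisor $S\subset X_i$ one has $\mathcal{O}_S(S)\cong K_S$. Second, the description — implicit in Definition-Lemma~\ref{auto} and Lemma~\ref{localequ} — of $\Do,\Du,\D3,\Dt$ as the strict transforms of the diagonal $D_0\subset\bar X$ and of its images under the order-six automorphism $\tau$, whose fibrewise action on $B'$ is complex multiplication $[\zeta]$ by a primitive sixth root of unity fixing the zero section; note $\tau^3=[-1]$. Third, the explicit two-step blow-up (first $\Do$, then $\Du$) of Lemma~\ref{localequ}, needed to control the behaviour over the six cusps. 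Wherever a curve class is to be identified, we use that it is determined by its pairings with $NS(X_i)$, which are recorded in Proposition~\ref{intersections}.

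Parts (1), (6) and (7) are essentially pure adjunction. Each of $S_k$, $\Do$, $\Du$ is a smooth divisor on which $\pi_i$ restricts to an isomorphism onto the rational elliptic surface $B$, where $K_B=-[f]$; hence $S\cdot S=-\,\pi_i^*(f)\cdot S$ in homology. For $S=S_k$ the class $\pi_i^*(f)\cdot S_k$ is independent of $k$, since the fibres of $r$ sitting in different sections $S_k$ are translates of one another inside the abelian fibres of $X_i\to\pu$; thus it equals $\fh$, which is (1). For $S=\Do$ (resp.\ $\Du$) one re-expresses $\Do\cdot\Do=-\,\pi_i^*(f)\cdot\Do$ using parts (2) and (5) and the identity $[\mathcal C]=3([\mathsf{s}_0]+[f])$ proved below, obtaining the stated formula.

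Parts (2)--(5) rest on the principle that the intersection of two sections of $\pi_i$ is the curve lying over the divisor of $B$ along which the corresponding Mordell--Weil points agree, so that the relevant ``difference'' divisors are governed by the endomorphisms $\zeta^a-\zeta^b$ of the generic fibre. Since $\tau$ fixes the zero section, $S_0$ meets each of $\Do,\Du,\D3,\Dt$ precisely along the zero section $\mathsf{s}_0$ of $r$, which avoids the cusps; that curve is $\soh$, which is (2). For $k\ge1$ the same computation gives a section of the abelian fibration $X_i\to\pu$ whose image in $B$ is a section of $r$; pairing it against the rows of Proposition~\ref{intersections} identifies its class with $\mathsf{s}_k$, giving (3). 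In (4) one has $\zeta^i-\zeta^{i+1}=\zeta^i(1-\zeta)$ with $1-\zeta$ a unit in $\Z[\zeta]$, so $\Do$ and $\Du$ agree along $\mathsf{s}_0$ on the smooth fibres; but here — unlike in (2) — both pass through all six points $P_j$, so their strict transforms acquire a contribution over each cusp, which the local model of Lemma~\ref{localequ} shows to be $\PA+\PB$ in homology, giving $\Do\cdot\Du=\soh+\sum_j(\PA+\PB)$. In (5), $\zeta^i-\zeta^{i+3}=2\zeta^i$, so $\Do$ and $\D3$ agree over the $2$-torsion multisection of $r$, the union of $\mathsf{s}_0$ with the trisection $\mathcal C=\{y=0\}\cap B$; since this time $\mathcal C$ passes through the cusps (the cusp $[0{:}0{:}1]$ of each singular fibre lies on $\mathcal C$), Lemma~\ref{localequ} is used once more to check that the contribution over each cusp is absorbed into the strict transform $\hat{\mathcal C}$ with no extra $\PA$ or $\PB$, giving $\Do\cdot\D3=\soh+\hat{\mathcal C}$ and, symmetrically, $\Du\cdot\Dt=\soh+\hat{\mathcal C}$. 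This local bookkeeping over the cusps in (4) and (5) — tracking exactly which exceptional curves the strict transforms pick up, and with which multiplicities, so as to match entries such as $\Do\cdot\PA=-1$ and $\D3\cdot\PA=1$ — is the step I expect to be the real obstacle.

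It remains to study $\mathcal C=\{y=0\}\cap B$. A line $y=0$ meets a Weierstrass cubic in three points, so $\mathcal C$ is a trisection, $\mathcal C\cdot f=3$, and it is disjoint from the zero section, so $\mathcal C\cdot\mathsf{s}_0=0$. For general $b$ the affine equation $x^3=-b(t)$ is smooth — the only candidate singularities lie over the zeros of $b$, where $b'\neq0$ — so $\mathcal C$ is a smooth curve; a computation in $\P(\mathcal{E})$ (equivalently, identifying $[\mathcal C]$ with the first Chern class of the line bundle of which $y$ is a section) gives $\mathcal C^2=9$, whence adjunction on $B$ gives $2g(\mathcal C)-2=\mathcal C^2+\mathcal C\cdot K_B=9-3$ and $g(\mathcal C)=4$. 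Finally, write $NS(B)=\Z[\mathsf{s}_0]\oplus\Z[f]\oplus L$ with $L$ the Mordell--Weil (essential) sublattice. The coefficients of $[\mathsf{s}_0]$ and $[f]$ in $[\mathcal C]$ are forced to be $3$ and $3$ by $\mathcal C\cdot\mathsf{s}_0=0$ and $\mathcal C\cdot f=3$; the $L$-component vanishes because $\mathcal C$ is $\tau$-invariant (the automorphism preserves $\{y=0\}$) while $\tau$ acts on $L$ as $[\zeta]$, hence without nonzero fixed vectors. Therefore $[\mathcal C]=3([\mathsf{s}_0]+[f])$, consistent with parts (6) and (7).
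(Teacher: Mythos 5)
Your treatment of parts (1)--(5) is broadly sound and close in spirit to the paper's (which cites Friedman--Morgan--Witten for (1), derives (2)--(4) from the local analysis of Lemmas \ref{auto} and \ref{localequ}, and for (5) decomposes the locus $\{y=v=0\}$ into $\soh$ plus $\bar{\mathcal C}$, getting $g=4$ from Riemann--Hurwitz and then $\mathcal C^2=9$ from adjunction; you reverse that order and add a $\tau$-invariance argument for $[\mathcal C]=3(s_0+f)$, which could equally be replaced by the purely numerical argument in the negative definite essential lattice). The genuine problem is your route to (6) and (7). The claim that $\pi_i$ restricts to an isomorphism of $\Do$ (and $\Du$) onto $B$ is false: by Lemma \ref{localequ}(3) these divisors are the strict transforms of the blow-up centers, and the entries $\Do\cdot\PA=-1$, $\Du\cdot\PB=-1$ of Proposition \ref{intersections} --- which you yourself invoke later --- force $\PA\subset\Do$ and $\PB\subset\Du$. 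Hence $\pi_i|_{\Do}$ contracts the six curves $\PA$, i.e.\ $\Do$ is $B$ blown up at the six cusps $p_j$, and Calabi--Yau adjunction gives
\begin{equation*}
\Do\cdot\Do \;=\; [K_{\Do}] \;=\; -\,[\pi_i^*(f)\cdot\Do] \;+\; \sum_j [\PA],
\end{equation*}
not $-[\pi_i^*(f)\cdot\Do]$. The missing class $\sum_j\PA$ is nonzero (pair it with $\Do$ or $\D3$), so your starting identity is wrong as stated.

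The error propagates: to land on the stated formula (6) from the corrected adjunction identity one must also use that $\hat{\mathcal C}$, being the strict transform of a curve passing simply through the six blown-up points, satisfies $[\hat{\mathcal C}] = 3\soh + 3[\pi_i^*(f)\cdot\Do] - \sum_j\PA$ in $H_2(X_i)$; the two $\sum_j\PA$ corrections then cancel and reproduce $\Do\cdot\Do = 2\pi_i^*(f)\cdot\Do + 3\soh - \hat{\mathcal C}$. Your sketch, which drops the exceptional contribution in both places, could only reach (6) by two compensating omissions, and the same applies to (7) with $\PB$. Note that the paper avoids this normal-bundle bookkeeping altogether: it deduces (6) and (7) from the linear equivalence of Lemma \ref{linearEquiv}, $\Do = 2\pi_i^*(f)+2\pi_i^*(s_0)+2S_0-\D3$ (itself read off from the intersection table), combined with (2) and (5). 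If you want to keep the adjunction approach, you must first establish that $\Do$ and $\Du$ contain the exceptional curves and are blow-ups of $B$, and carry the $\sum_j\PA$, $\sum_j\PB$ terms through both the canonical class and the class of $\hat{\mathcal C}$; the remaining cusp bookkeeping in (4) and (5), which you correctly identify as the hard step, is exactly the ``careful analysis of Lemma \ref{localequ}'' the paper relies on.
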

\begin{proof}
	(1) follows from an argument in  \cite{Friedman:1997yq} (see (7.30) on p. 730). (2), (3) and (4) follow from the analysis of Lemma \ref{auto} and Lemma \ref{localequ}. 
	
	$\Do$ and $\D3$ ($\Du$ and $\Dt$ respectively) intersect  in the zero locus $y=v=0$ in $\bar X$.  The intersection locus has two components, $z=w=0$ and a remaining curve $\bar{\mathcal{C}}$.  The first component is in the resolution $X_i$ in the class $\soh$; the strict transform of $\bar{\mathcal{C}}$,
	$\hat{\mathcal{C}}$, is a smooth curve. Its projection to $B$ intersects the general fiber  $f$ in three distinct points, and in one point at the  six cusps (where $b=0$). That is, $\mathcal{C}$ is a $3:1$ cover of $\pu$ totally ramified at $6$ points. It is then a curve of genus $4$, by the Riemann-Hurwitz formula. The adjunction formula applied to $(\mathcal{C}, B)$, implies that $\mathcal{C}^2=9$. This proves (5).
	
To prove	(6) and (7) we need the following Lemma \ref{linearEquiv} combined with  (1)--(5):
	\end{proof}

\begin{lemma}\label{linearEquiv}  With the same hypothesis as in Proposition \ref{prop-basis}:
	\begin{enumerate}
		\item 
			$D^i_i = 2 \pi_i^*(f) + 2 \pi_i^*(\sfs_0) +2S_0 - \D3$,
				\item 
			$\Du = 2 \pi_i^*(f) + 2 \pi_i^*(\sfs_0)+ 2S_0 - \Dt$.

	\end{enumerate}
	\end{lemma}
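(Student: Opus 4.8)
The plan is to derive the two linear equivalences in Lemma \ref{linearEquiv} by working on the singular threefold $\bar X$ and then pulling back to the resolution $X_i$, using the geometry of the automorphism $\tau$ and the explicit local equations of Lemma \ref{localequ}. First I would establish the statement at the level of Weil divisors on $\bar X$. Recall that $\bar\pi : \bar X \to B$ is the pullback of the elliptic surface $r': B' \to \P^1$, and each $D_k$ is a (rational) section. On a rational elliptic surface $B'$, writing $o'$ for the zero section, a section $s'$ is linearly equivalent to $o' + (\text{fibral and base contributions})$; concretely, for the Weierstrass model $u^2w = v^3 + bw$ the six sections related by the order-six automorphism $\tau_{\mathcal E}$ satisfy relations of the form $s' + \tau^3_* s' \sim 2 o' + 2 F'$ (two sections exchanged by the fibral involution $u \mapsto -u$ add up to twice the zero section plus fibral classes), and similarly for the cube-root part. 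Pulling these relations back along $\bar\pi'$ (respectively $\bar\pi$) converts the class of the fiber $F'$ of $B'$ into a multiple of $\pi_i^*(f)$ and $\pi_i^*(s_0)$, and converts $o'$ into $S_0$. This is exactly the source of the coefficients $2\pi_i^*(f) + 2\pi_i^*(s_0) + 2 S_0$ appearing on the right-hand side.

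Second, I would identify which pair of the $D_k$ are exchanged by the relevant involution: from Definition-Lemma \ref{auto}, $\tau_{\mathcal E}$ acts as $([v,u,w]) \mapsto ([\epsilon v, -u, w])$, so $\tau^3$ is the pure sign flip $u \mapsto -u$ together with $v \mapsto v$ (since $\epsilon^3 = 1$). Hence $D_i$ and $D_{i+3}$ are interchanged by the fibral involution, and likewise $D_{i+1}$ and $D_{i+3+1} = D_{i+4}$, i.e. $\Du$ and $\Dt$ in the paper's notation. This is precisely why the two asserted relations pair $\Do$ with $\D3$ and $\Du$ with $\Dt$. The Weierstrass-level identity for a section $s'$ and its involute $\iota(s')$ on $B'$, namely $s' + \iota(s') \sim 2 o' + (\text{vertical})$, then gives on $\bar X$ the equivalence $D_i + D_{i+3} \sim 2 S_0 + 2\bar\pi^*(\text{pt on }\P^1 \text{ pulled back suitably})$; tracking the vertical class through $\bar\pi : \bar X \to B \to \P^1$ and matching Chern classes with the data $B = \P(\co(3)\oplus\co(2)\oplus\co)$, $b \in H^0(\co_{\P^1}(6))$ produces the coefficient $2\pi_i^*(f) + 2\pi_i^*(s_0)$.

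Third, I would pass from $\bar X$ to the resolution $X_i$. Since $\varphi_i : X_i \to \bar X$ is a small resolution, $\varphi_i^*$ is not literally defined on all Weil divisors, but for the strict transforms $D^i_k$ one has $D^i_i + D^i_{i+3} = \varphi_i^{*}(D_i + D_{i+3})$ up to contributions from the exceptional curves $\PA, \PB$. To pin down these exceptional contributions one uses the intersection numbers already tabulated in Proposition \ref{intersections}: intersecting both sides of the candidate relation with $\PA$ and $\PB$ and with a basis of curve classes, and checking equality, forces the coefficients of any exceptional divisor correction to vanish (indeed the table shows $\D3 \cdot \PA = 1 = -\, \Do\cdot\PA$ and similarly for $\PB$, so $D^i_i + D^i_{i+3}$ has intersection number $0$ with each exceptional curve, consistent with being a pullback with no correction). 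Then verifying the full equality in $NS(X_i) \cong H^2(X_i,\Z)$ reduces, by Proposition \ref{prop-basis} and Poincaré duality, to checking that both sides have the same intersection numbers against the basis $\{\E, \fh, \skh, \lkh, \PA, \PB\}$ of $H_2(X_i,\Z)$ — a finite check read off from the table in Proposition \ref{intersections}.

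The main obstacle I anticipate is the bookkeeping in the second step: getting the vertical (fiber-class) contribution exactly right when pushing the Weierstrass relation on $B'$ through the two-step projection $\bar X \to B \to \P^1$, since the answer is expressed in the mixed basis $\pi_i^*(f), \pi_i^*(s_0), S_0$ on $X_i$ rather than in natural classes on $B'$, and one must be careful that the twist data of $\P(\mathcal E)$ contributes the factor $2$ (and not, say, $3$ or $1$) in front of $\pi_i^*(f)$ and $\pi_i^*(s_0)$. A clean way to sidestep delicate Weierstrass computations altogether is to treat the two displayed equalities as \emph{ansätze}: write $D^i_i + \D3 = a\,\pi_i^*(f) + b\,\pi_i^*(s_0) + c\, S_0 + (\text{possible }\skh\text{-duals}) + (\text{exceptional corrections})$ with unknown integer coefficients, intersect with every element of the $H_2$ basis, and solve the resulting linear system using Propositions \ref{intersections} and \ref{lDoDo}; the unique solution is $a = b = 2$, $c = 2$, with no exceptional correction, which is the claim.
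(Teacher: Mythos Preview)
Your proposal is correct, and in fact the ``sidestep'' you describe in your last paragraph \emph{is} the paper's entire proof: the paper says only ``It follows from Proposition~\ref{intersections}.'' Since $NS(X_i)\simeq H^2(X_i,\Z)$ by Proposition~\ref{prop-basis}(1), to verify the two identities it suffices to check that both sides pair identically with the $H_2$-basis $\{\E,\fh,\skh,\lkh,\PA,\PB\}$ of Proposition~\ref{prop-basis}(2). Reading off the table in Proposition~\ref{intersections}, one finds for example that $\Do+\D3$ and $2\pi_i^*(f)+2\pi_i^*(s_0)+2S_0$ both intersect $(\E,\fh,\hat{\mathsf s}_0,\skh,\lkh,\PA,\PB)$ in $(2,2,-2,0,0,0,0)$, and similarly for the second identity. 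No Weierstrass manipulation or analysis on $\bar X$ is needed.

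Your geometric route---observing that $\tau^3$ is the hyperelliptic involution on the fibers of $\bar\pi$, so that $D_i$ and $D_{i+3}$ are Mordell--Weil inverses and hence $\Do+\D3-2S_0\in\pi_i^*NS(B)$---is a valid and illuminating \emph{explanation} of why such a relation should exist, and it reduces the unknowns from the full basis of $NS(X_i)$ to the ten vertical classes $\pi_i^*(f),\pi_i^*(s_k)$. But two caveats: (i) your phrasing ``pulling these relations back along $\bar\pi'$'' is not quite right, since the $D_i$ are graphs of automorphisms, not pullbacks of sections of $B'$; the correct statement is that the Mordell--Weil relation holds intrinsically for the sections $D_i$ of $\bar\pi:\bar X\to B$, with the vertical correction lying in $\pi_i^*NS(B)$ because there are no codimension-one reducible fibers; and (ii) even after this reduction you still must solve for the vertical class using the intersection table, so the geometric step buys conceptual clarity but no computational shortcut. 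The paper simply omits it.
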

 \begin{proof}  We apply  the pairings listed  under the double lines in the Table in Proposition \ref{intersections} and solve the systems.
 	\end{proof}

\begin{proposition}\label{GV}
The { genus-zero} Gopakumar-Vafa invariants on the sublattice of curve classes generated by  $\mathcal{P}_{j}^{i,A}$ and  $\mathcal{P}_{j}^{i,B}$ are
\begin{equation*} n_{\{0,[\mathcal{P}_{j}^{i,A}]\} }=1, \ \  n_{\{0,[\mathcal{P}_{j}^{i,B}]\} }=1,  \ \   \ n_{\{0,[ \mathcal{P}_{j}^{i,A}+\mathcal{P}_{j}^{i,B}]\} }=1  
\end{equation*}
and $0$ otherwise.
\end{proposition}
\begin{proof}
	This follows from \cite{BryanKatzLeung, BryanKarp2005}.  Note that each of these curves is super-rigid \cite[page 291]{BryanPandhar2001}.
	\end{proof}

 \medskip
 
\subsection{The Shioda map and height pairings}\label{ShiodaHeightP}~

\begin{definition} We denote the independent elements of the Mordell-Weil group $\mw(X_i/B)$ 
	as ${\mathbb S}_a$, $a=1, \ldots, 10$ with ${\mathbb S}_l = S_l$, $1\leq l \leq 8$, and ${\mathbb S}_9 = D^i_i$, ${\mathbb S}_{10} = D^i_{i+1}$.
\end{definition}
\begin{definition}\label{defShioda}
With the notation as in Definition \ref{9sections}, the image of the set of independent sections ${\mathbb S}_a$, $1\leq a \leq 10$, within MW$(X_i/B)$ under the Shioda homomorphism
\begin{equation*}
\sigma: {\rm MW}(X_i/B) \to {NS}(X_i) \otimes \mathbb Q
\end{equation*}
introduced in \cite{Shioda2,Wazir,Park:2011ji} is defined to be
\begin{equation*}
	\sigma({\mathbb S}_a) \stackrel{def}= {\mathbb S}_a - S_0 -  {\pi_i}^\ast  \pi_{i \ast} (({\mathbb S}_a- S_0) \cdot S_0) \,.
\end{equation*}
The associated height pairings take the form
\begin{equation*}
b_{a,b}  \stackrel{def}= - (\pi_{i})_\ast (\sigma({\mathbb S}_a) \cdot \sigma({\mathbb S}_b))
\end{equation*}
and are valued in $H_2(B)$.

\end{definition}

Proposition \ref{intersections} enables us to prove  the following Corollaries:
\begin{corollary}\label{intersectionsForAnom}

	With the notation as in Definition \ref{defShioda}, the Shioda map images take the form

           $  \sigma({\mathbb S}_l) = \sigma(S_l) = {S_l-S_0 - {\pi_i}^\ast(f)}$,  \quad $1 \leq l \leq 8$, 
	
	$\sigma({\mathbb S}_9)    = \sigma(D^i_i) =
D^i_i- S_0 -  {{\pi_i}^\ast} (\sfs_0 +f)$,

$\sigma({\mathbb S}_{10}) = \sigma(\Du)= \Du- S_0 -  {{\pi_i}^\ast} (\sfs_0+f)$.

\noindent They have the following 
 intersections in $X_i$: 

$  \sigma({\mathbb S}_9)   \cdot \sigma({\mathbb S}_9)    {=  \soh - \hat {\mathcal{C}} + \fh +\E}$,

$  \sigma({\mathbb S}_{10})   \cdot \sigma({\mathbb S}_{10})  = 
 \soh - \hat {\mathcal{C}} + \fh +\E$,

$  \sigma({\mathbb S}_{9})   \cdot \sigma({\mathbb S}_{10})  = 
{  \soh 
- \soh - \soh -{\pi_i}^*(f) \cdot \Do\\
 - \soh - \fh  + \soh + \fh\\
 -\soh + \soh  -\E + \E\\
		-  {{\pi_i}^*(f) \cdot \Du}+ \fh + \E=}$\\
	$\phantom{\sigma(D^i_i) \cdot \sigma(\Du)}={   -\soh  + \fh - {\pi_i}^*(f) \cdot \Do - {\pi_i}^*(f) \cdot \Du + \E}$,

$  \sigma({\mathbb S}_k) \cdot   \sigma({\mathbb S}_k) = {-3 {\pi_i}^*(f) \cdot S_k + \fh}$,

$  \sigma({\mathbb S}_k) \cdot   \sigma({\mathbb S}_{k'}) = {-  {\pi_i}^*(f) \cdot S_{k'}- {\pi_i}^*(f) \cdot S_k + \fh}, \quad k \neq k'$,

$  \sigma({\mathbb S}_k) \cdot   \sigma({\mathbb S}_9) = {\skh   -\soh- {\pi_i}^*(f) \cdot S_k -  {{\pi_i}^*(f) \cdot \Do} + \fh  + \E}$,

$  \sigma({\mathbb S}_k) \cdot   \sigma({\mathbb S}_{10})
= {S_k \cdot S_{10}  -\soh  - {\pi_i}^*(f) \cdot S_k-  {{\pi_i}^*(f) \cdot \Du} + \fh + \E}$.

\end{corollary}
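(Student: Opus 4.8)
The plan is to prove Corollary \ref{intersectionsForAnom} as a direct consequence of the definition of the Shioda map (Definition \ref{defShioda}), together with the intersection numbers established in Propositions \ref{intersections} and \ref{lDoDo}. First I would compute the Shioda images $\sigma(\mathbb{S}_a)$ by plugging the generators into the formula $\sigma(\mathbb{S}_a) = \mathbb{S}_a - S_0 - \pi_i^\ast \pi_{i\ast}((\mathbb{S}_a - S_0)\cdot S_0)$. For this I need the classes $\pi_{i\ast}((\mathbb{S}_a - S_0)\cdot S_0)$ in $H_2(B,\Z) \simeq \mathrm{NS}(B)$: for $\mathbb{S}_l = S_l$ with $1 \le l \le 8$, the intersection $S_l \cdot S_0$ together with $S_0 \cdot S_0 = -\hat{\mathsf f}$ from Proposition \ref{lDoDo}(1) and the table in Proposition \ref{intersections} gives $\pi_{i\ast}((S_l - S_0)\cdot S_0) = f$; for $\mathbb{S}_9 = D^i_i$ and $\mathbb{S}_{10} = D^i_{i+1}$, the rows for $D^i_i$ and $\Du$ in the table give $\pi_{i\ast}((D^i_i - S_0)\cdot S_0) = s_0 + f$ and likewise for $\Du$. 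This yields the three displayed formulas for the Shioda images.

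Next I would compute the pairwise intersection products $\sigma(\mathbb{S}_a)\cdot \sigma(\mathbb{S}_b)$ by bilinear expansion. Each term that appears — $\mathbb{S}_a \cdot \mathbb{S}_b$, $\mathbb{S}_a \cdot S_0$, $S_0 \cdot S_0$, $\mathbb{S}_a \cdot \pi_i^\ast(\cdot)$, and $\pi_i^\ast(\cdot)\cdot\pi_i^\ast(\cdot)$ — is read off either from Proposition \ref{intersections} (for products involving $\pi_i^\ast f$, $\pi_i^\ast s_k$, the $S_l$, and the $D$'s against the fibral curves) or from Proposition \ref{lDoDo} (for the self-intersections $S_k \cdot S_k = -\hat{\mathsf f}$, $D^i_i \cdot D^i_i$, $\Du \cdot \Du$, and the mixed products $D^i_i \cdot \Du$, $D^i_i \cdot \D3 = \soh + \hat{\mathcal C}$, etc.). For $\sigma(\mathbb{S}_9)\cdot\sigma(\mathbb{S}_9)$ I would use Proposition \ref{lDoDo}(6), $D^i_i \cdot D^i_i = 2\pi_i^\ast(f)\cdot\Do + 3\soh - \hat{\mathcal C}$, and the fact that $\pi_i^\ast(f) \cdot \pi_i^\ast(\text{anything}) = 0$ since $f^2 = 0$ on $B$, collecting terms to arrive at $\soh - \hat{\mathcal C} + \hat{\mathsf f} + \E$; the computation for $\sigma(\mathbb{S}_{10})$ is identical using Proposition \ref{lDoDo}(7). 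For the mixed term $\sigma(\mathbb{S}_9)\cdot\sigma(\mathbb{S}_{10})$ I would expand fully, use Lemma \ref{linearEquiv} (or equivalently Proposition \ref{lDoDo}(4), $D^i_i \cdot \Du = \soh + \sum_j(\PA + \PB)$) to handle $D^i_i \cdot \Du$, and keep track of the $\pi_i^\ast(f)\cdot\Do$ and $\pi_i^\ast(f)\cdot\Du$ terms which do not simplify to multiples of the basis of Proposition \ref{prop-basis} but are legitimate divisor classes; the intermediate line-by-line cancellations displayed in the statement record exactly this bookkeeping. The four identities for $\sigma(\mathbb{S}_k)\cdot\sigma(\mathbb{S}_{k'})$ and $\sigma(\mathbb{S}_k)\cdot\sigma(\mathbb{S}_9)$, $\sigma(\mathbb{S}_k)\cdot\sigma(\mathbb{S}_{10})$ follow the same pattern using $S_k \cdot S_k = -\hat{\mathsf f}$ and the table rows for $S_l$.

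The main obstacle is bookkeeping rather than conceptual: one must carefully distinguish which products live in $\mathrm{NS}(X_i)$ versus which are pushed forward to $B$, track the signs coming from the negative self-intersections $S_0\cdot S_0 = -\hat{\mathsf f}$ and $S_k \cdot S_k = -\hat{\mathsf f}$, and recognize that terms like $\pi_i^\ast(f)\cdot\Do$ are irreducible inputs (not further reducible) so that the final answers are genuinely expressed in the mixed form shown. A secondary subtlety is ensuring that the identification of $\hat{\mathcal C}$ via Proposition \ref{lDoDo}(5) — the genus-$4$ triple cover curve with $[\mathcal C] = 3(s_0 + f)$ — is used consistently, since it enters the self-intersections of $\sigma(\mathbb{S}_9)$ and $\sigma(\mathbb{S}_{10})$ and is precisely what makes the subsequent height-pairing matrix (and hence the anomaly check in Section \ref{sec_Anomalies}) come out correctly. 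Once these are handled, the corollary is a finite, mechanical verification, and the displayed chains of equalities in the statement are essentially the proof written out.
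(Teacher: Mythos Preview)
Your proposal is correct and takes essentially the same approach as the paper: the paper presents this corollary (together with the two that follow) as a direct consequence of Proposition~\ref{intersections}, with no further argument given, and your plan---apply Definition~\ref{defShioda}, read off the needed divisor--divisor products from Proposition~\ref{lDoDo}, and use the table of Proposition~\ref{intersections} for the remaining pairings---is exactly the intended mechanical verification. Your remarks on the bookkeeping subtleties (distinguishing classes on $X_i$ versus $B$, tracking $-\hat{\mathsf f}$ from $S_k\cdot S_k$, and leaving $\pi_i^*(f)\cdot\Do$ unreduced) are apt and, if anything, spell out more than the paper does.
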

\begin{corollary}\label{heightp}
The associated height-pairings are

$b_{9,9} 
=  {- \sfs_0 + {\mathcal{C}} - f}$,

$b_{10,10} 
=  {- \sfs_0 + {\mathcal{C}} - f}$,

$b_{9,10} 
=  { \sfs_0  + f}$,

$b_{k',k} = {  f},  \quad k \neq k'$,

$b_{k,k} = {2f}$,

{{$b_{k,9} = {  \sfs_0 - \sk  +f}$, }}

{{$b_{k,10} = { \sfs_0 -\sk  + f}.$}}

\end{corollary}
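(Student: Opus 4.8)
The plan is to compute the height pairings $b_{a,b} = -\pi_{i\ast}(\sigma(\mathbb{S}_a)\cdot\sigma(\mathbb{S}_b))$ by pushing forward the intersection numbers in $H_2(X_i,\mathbb{Z})$ already assembled in Corollary \ref{intersectionsForAnom}. The key observation is that $\pi_{i\ast}$ kills the fibral classes $\mathcal{E}$, $\mathcal{P}_j^{i,A}$, $\mathcal{P}_j^{i,B}$ (and hence $\mathcal{P}_j^{i,0} = \mathcal{E} - \mathcal{P}_j^{i,A} - \mathcal{P}_j^{i,B}$), sends $\fh = S_0\cdot\F$ to $f$, sends $\skh = S_0\cdot\pi_i^\ast(\sk)$ to $\sk$ (and $\soh$ to $s_0$), and sends $\lkh = S_l\cdot\pi_i^\ast(\mathsf{s}_0)$ to $s_0$. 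Applying these rules termwise to the six displayed intersection products in Corollary \ref{intersectionsForAnom} is essentially bookkeeping: e.g. $\sigma(\mathbb{S}_9)\cdot\sigma(\mathbb{S}_9) = \soh - \hat{\mathcal{C}} + \fh + \mathcal{E}$ pushes to $s_0 - \mathcal{C} + f + 0$, and negating gives $b_{9,9} = -s_0 + \mathcal{C} - f$, matching the claim; likewise $b_{10,10}$. For $b_{9,10}$ one pushes forward $-\soh + \fh - \pi_i^\ast(f)\cdot\Do - \pi_i^\ast(f)\cdot\Du + \mathcal{E}$, using $\pi_{i\ast}(\pi_i^\ast(f)\cdot D^i_i) = f\cdot\pi_{i\ast}(D^i_i) = f\cdot s_0 = 0$ (projection formula, since $D^i_i$ is a section so $\pi_{i\ast}(D^i_i)$ is the zero-section class $s_0$ and $f\cdot s_0 = 0$ on $B$); that leaves $-s_0 + f$, and negation yields $b_{9,10} = s_0 + f$. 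The analogous projection-formula step handles $\pi_i^\ast(f)\cdot S_k \mapsto f\cdot\sk = 0$ and $\pi_i^\ast(f)\cdot\Do \mapsto 0$ wherever they appear.

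Concretely I would organize the proof as follows. First, record the pushforward dictionary for the $H_2$-basis of Proposition \ref{prop-basis} under $\pi_{i\ast}$, reading it off from Definition \ref{NEbasis} and the projection formula; in particular note $\pi_{i\ast}(\soh) = s_0$, $\pi_{i\ast}(\skh) = \sk$, $\pi_{i\ast}(\lkh) = s_0$, $\pi_{i\ast}(\fh) = f$, $\pi_{i\ast}(\mathcal{E}) = \pi_{i\ast}(\PA) = \pi_{i\ast}(\PB) = 0$. Second, apply this dictionary to each of the seven intersection expressions in Corollary \ref{intersectionsForAnom} in turn, taking care with the mixed terms $\pi_i^\ast(f)\cdot\Do$, $\pi_i^\ast(f)\cdot\Du$, $\pi_i^\ast(f)\cdot S_k$, all of which push forward to $0$ on $B$ because $f\cdot s_0 = 0$ and $f\cdot\sk = 0$ (a fiber meets any section in one point, but $f$ here denotes the class pushed through and intersected against $s_0$ or $\sk$ which live in the same surface—more precisely $\pi_{i\ast}(\pi_i^\ast(f)\cdot D) = f\cdot\pi_{i\ast}(D)$ and each $\pi_{i\ast}(D)$ is a section class $\sk$ with $f\cdot\sk=1$... ). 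Here I must be careful: $f\cdot\sk = 1$ on $B$, not $0$, so the terms $-3\pi_i^\ast(f)\cdot S_k$ etc. push to $-3f\cdot\sk = -3$ times the point class, i.e. to $0$ in $H_0$ — wait, $b_{k,k}$ should be $2f$, a divisor class on $B$, so the pushforward must land in $H^2(B)$, meaning $\pi_{i\ast}$ of a curve class. The resolution: $\sigma(\mathbb{S}_k)\cdot\sigma(\mathbb{S}_k)$ is a \emph{curve} class in $X_i$ (intersection of two divisors), $\pi_{i\ast}$ sends it to a curve class in $B$; $\pi_{i\ast}(\pi_i^\ast(f)\cdot S_k) = f\cdot\pi_{i\ast}(S_k)$ but $\pi_{i\ast}(S_k)$ as a curve class is — no, $S_k$ is a divisor (surface) in $X_i$, $\pi_i^\ast(f)\cdot S_k$ is a curve, its pushforward is the curve $\pi_{i\ast}(\pi_i^\ast(f)\cdot S_k) = (\pi_i|_{S_k})_\ast(\pi_i^\ast(f)|_{S_k})$; since $\pi_i|_{S_k}: S_k \to B$ is an isomorphism (section), this is just $f$ as a curve class on $B$. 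So $\pi_i^\ast(f)\cdot S_k \mapsto f$, giving $b_{k,k} = -(-3f + f) = 2f$. Good — that is the point to get right.

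So the main (and only real) obstacle is \textbf{tracking which pushforwards land in which degree and getting the section-restriction identities right}: $\pi_{i\ast}(\pi_i^\ast(\alpha)\cdot D^i_i) = \alpha$ as a curve class on $B$ when $D^i_i$ is a section (isomorphic to $B$ under $\pi_i$), while $\pi_{i\ast}(\fh) = \pi_{i\ast}(S_0\cdot\F) = f$ and $\pi_{i\ast}(\skh) = \sk$ because $S_0$ is a section. Once these are pinned down, each $b_{a,b}$ follows by a one-line termwise pushforward and a sign flip. I would therefore write: "By the projection formula and the fact that $S_0, D^i_i, D^i_{i+1}$ and the $S_k$ are sections of $\pi_i$, the pushforward $\pi_{i\ast}$ acts on the relevant classes by $\soh\mapsto s_0$, $\skh\mapsto \sk$, $\lkh\mapsto s_0$, $\fh\mapsto f$, $\mathcal{E},\PA,\PB\mapsto 0$, $\pi_i^\ast(f)\cdot\Do\mapsto f$, $\pi_i^\ast(f)\cdot\Du\mapsto f$, $\pi_i^\ast(f)\cdot S_k\mapsto f$, and $\hat{\mathcal{C}}\mapsto\mathcal{C}$. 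Substituting into the expressions of Corollary \ref{intersectionsForAnom} and using $b_{a,b} = -\pi_{i\ast}(\sigma(\mathbb{S}_a)\cdot\sigma(\mathbb{S}_b))$ gives the stated values; the homological identity $[\mathcal{C}] = [3(s_0+f)]$ from Proposition \ref{lDoDo} then also lets one rewrite $b_{9,9} = b_{10,10} = 2s_0 + 2f$ if desired." — being careful, as flagged, that $\pi_i^\ast(f)\cdot\Do\mapsto f$ and not $0$, since $\Do$ is a section and $f\cdot s_0=1$ is a point but the pushforward of the \emph{curve} $\pi_i^\ast(f)\cap\Do$ is the curve $f$ on $B$. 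Hmm, actually let me reconsider once more whether it is $f$ or $0$: $\Do$ is a section, $\pi_i|_{\Do}$ an isomorphism onto $B$, and $\pi_i^\ast(f)|_{\Do} = (\pi_i|_{\Do})^\ast(f)$, whose image under the isomorphism is $f$. Yes, $\mapsto f$. With $-\soh + \fh - \pi_i^\ast(f)\cdot\Do - \pi_i^\ast(f)\cdot\Du + \mathcal{E} \mapsto -s_0 + f - f - f + 0 = -s_0 - f$, negation gives $b_{9,10} = s_0 + f$. Consistent.
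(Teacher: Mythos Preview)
Your approach is correct and is exactly what the paper does: the paper's proof is the single sentence ``Note that by construction ${\pi_i}_\ast(\skhe)= \sk$,'' i.e.\ it simply records the one pushforward identity not already obvious from the definitions and leaves the termwise substitution to the reader. Your eventual dictionary ($\soh\mapsto s_0$, $\skh\mapsto\sk$, $\fh\mapsto f$, $\hat{\mathcal C}\mapsto\mathcal C$, fibral classes $\mapsto 0$, and $\pi_i^\ast(f)\cdot D\mapsto f$ for any section $D$) is right, and once you stopped confusing the pushforward of a curve class with an intersection number on $B$, each $b_{a,b}$ follows by a one-line check.
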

\begin{proof}  Note that by construction $(\pi_i)_\ast (\hat{\mathsf{s}}_k)= \sk \in H_2(B)$.
	\end{proof}

\begin{corollary}\label{intersectionsForGaugeAnom}

The only non-vanishing 
intersection numbers of the height pairings of Corollary \ref{heightp} 
are, for $1 \leq k,l \leq 8$:

{{$b_{9,9} \cdot b_{k,l} = b_{10,10} \cdot b_{k,l} =  2( 1 +  \delta_{kl})$,}}

{{$b_{9,k} \cdot b_{9,l}      = -(1 + \delta_{kl})$,}}

$b_{9, 9} \cdot b_{9, 9} = b_{10, 10} \cdot b_{10, 10}=  4$,

${b_{9,9} \cdot b_{10,10} = 4}$,

$b_{9,10} \cdot b_{9,10} = 1$,

$b_{9,9} \cdot b_{9, 10} = b_{10,10} \cdot b_{9, 10}   =  2$.

\end{corollary}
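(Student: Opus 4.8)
The plan is to compute all pairwise intersection numbers of the height-pairing classes $b_{a,b}\in H^2(B,\Z)$ directly, using only the intersection theory on the rational elliptic surface $B$. The key observation is that Corollary~\ref{intersectionsForAnom} and its successor express each $b_{a,b}$ as an explicit integral combination of the classes $f$, $s_0$, $\sk$ (for $1\le k\le 8$) and $\mathcal C$ in $H^2(B,\Z)$. So the whole statement reduces to knowing the Gram matrix of these classes under the intersection pairing on $B$. I would first assemble that Gram matrix: $f^2=0$, $f\cdot s_0 = f\cdot \sk = 1$ (sections), $s_0^2 = \sk^2 = -1$ (the canonical bundle of a rational elliptic surface is $-f$, so adjunction gives section self-intersection $-1$), $s_0\cdot \sk$ and $\sk\cdot s_{k'}$ determined by the chosen basis of $\MW(B/\P^1)$ — here one uses that the $\mathsf s_k$ were selected (Definition~\ref{PicB}, and the normalization implicit in Proposition~\ref{lDoDo}(3)) so that the relevant off-diagonal numbers are the standard ones for a generic rational elliptic surface, and finally $\mathcal C\cdot f = 3$, $\mathcal C\cdot s_0 = 0$, $\mathcal C^2 = 9$ from Proposition~\ref{lDoDo}(5), together with $[\mathcal C] = 3(s_0+f)$ stated there, which also fixes $\mathcal C\cdot \sk = 3(s_0+f)\cdot \sk = 3(s_0\cdot\sk + 1)$.

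Next I would substitute. Using $[\mathcal C]=3(s_0+f)$ one rewrites $b_{9,9}=b_{10,10}= -s_0 + \mathcal C - f = -s_0 + 3s_0 + 3f - f = 2s_0 + 2f$, and $b_{9,10}=s_0+f$, $b_{k,k}=2f$, $b_{k,k'}=f$, $b_{k,9}=b_{k,10}= s_0 - \sk + f$. Then the listed products are pure surface intersection numbers: e.g. $b_{9,9}\cdot b_{9,9} = (2s_0+2f)^2 = 4s_0^2 + 8 s_0\cdot f + 4f^2 = -4 + 8 + 0 = 4$; $b_{9,10}\cdot b_{9,10} = (s_0+f)^2 = -1 + 2 + 0 = 1$; $b_{9,9}\cdot b_{9,10} = (2s_0+2f)\cdot(s_0+f) = 2(s_0+f)^2 = 2$; $b_{9,9}\cdot b_{k,l} = (2s_0+2f)\cdot(f + \delta_{kl} f + \ldots)$, where one must expand $b_{k,l}$ correctly as $f$ for $k\ne l$ and $2f$ for $k=l$, so $b_{9,9}\cdot b_{k,l} = 2(s_0+f)\cdot (1+\delta_{kl})f = 2(1+\delta_{kl})$, since $(s_0+f)\cdot f = 1$. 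The combination $b_{9,k}\cdot b_{9,l} = (s_0-\sk+f)\cdot(s_0 - s_l + f)$ requires the entries $s_0\cdot\sk$, $\sk\cdot s_l$; expanding and using the generic-Schoen normalization values yields $-(1+\delta_{kl})$, and one should double-check this against the known fact that the height pairing $b_{k,l}$ on $\MW(\bar X/B)$ (equivalently on $\MW(B'/\P^1)$) is $2f$ on the diagonal and $f$ off-diagonal, which is exactly the $E_8$-type intersection structure up to the $f$-normalization. Finally one checks that every product not on the list vanishes — chiefly $b_{k,l}\cdot b_{k',l'} = (\ast f)\cdot(\ast f) = 0$ because $f^2=0$, and $b_{9,10}\cdot b_{k,l} = (s_0+f)\cdot(\ast f) = \ast$, so here I would need to be careful: $(s_0+f)\cdot f = 1 \ne 0$, which would seem to contradict the claim that $b_{9,10}\cdot b_{k,l}$ vanishes — so the correct reading must be that "non-vanishing" refers to the listed entries and the $b_{9,10}\cdot b_{k,l}$ products are subsumed or genuinely computed to be whatever the bookkeeping gives; I would reconcile this by recomputing $b_{9,10}\cdot b_{k,l}=(s_0+f)\cdot(1+\delta_{kl})f = (1+\delta_{kl})$ and confirming against the anomaly equations of Section~\ref{sec_Anomalies} which products actually enter.

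The main obstacle is pinning down the off-diagonal numbers $s_0\cdot \sk$ and $\sk\cdot s_{k'}$ unambiguously — i.e. making precise the normalization of the basis $\{\mathsf s_k\}$ of sections of $B$ relative to which Proposition~\ref{lDoDo} and Corollary~\ref{intersectionsForAnom} hold. Once the convention is fixed (the natural one being $\sigma(\sk)\cdot\sigma(s_{k'})$-type quantities matching the $E_8$ lattice, so that $b_{k,l}$ comes out to $(1+\delta_{kl})f$ or $2f$/$f$), everything else is a mechanical expansion in the two-dimensional (after using $[\mathcal C]=3(s_0+f)$, effectively) sublattice spanned by $s_0$ and $f$, plus the $\sk$ directions which only ever appear paired against $f$ or against each other. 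I would therefore organize the proof as: (i) record the Gram matrix on $B$; (ii) substitute $[\mathcal C]=3(s_0+f)$ to get closed forms for all $b_{a,b}$; (iii) expand each product in the list and verify the stated value; (iv) observe that all remaining products reduce to intersections of classes proportional to $f$ with each other or with $s_0+f$, and tabulate which are zero, confirming the "only non-vanishing" claim after matching with the products that enter the anomaly polynomial.
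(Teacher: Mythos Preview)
Your approach—compute everything on $B$ using the explicit height-pairing formulas from the preceding corollary together with $[\mathcal C]=3(s_0+f)$ from Proposition~\ref{lDoDo}—is correct and is exactly what the paper does (no proof is written out; the corollary is a direct numerical consequence of the preceding one).

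Two clarifications. First, you need not hedge on the off-diagonal section intersections: $s_k\cdot s_{k'}=0$ for $k\ne k'$ and $s_k^2=-1$ can be read directly from the table in Proposition~\ref{intersections}, since $\pi_i^\ast(s_k)\cdot \hat{\mathsf s}_{k'}=\pi_i^\ast(s_k\cdot s_{k'})\cdot S_0 = s_k\cdot s_{k'}$; geometrically, the $\mathsf s_k$ are the nine disjoint exceptional $(-1)$-curves of $B$ viewed as a blow-up of $\P^2$. With this, your expansion $b_{9,k}\cdot b_{9,l}=(s_0-s_k+f)\cdot(s_0-s_l+f)=-1+s_k\cdot s_l=-(1+\delta_{kl})$ goes through cleanly, and no separate ``$E_8$ normalization'' is needed.

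Second, your suspicion about $b_{9,10}\cdot b_{k,l}$ is well-founded: $(s_0+f)\cdot(1+\delta_{kl})f=1+\delta_{kl}\ne 0$, and likewise $b_{10,k}\cdot b_{10,l}=b_{9,k}\cdot b_{10,l}=-(1+\delta_{kl})$ since $b_{9,k}=b_{10,k}$. These products do not appear in the list, so the phrase ``only non-vanishing'' in the statement should not be read literally as asserting that every other pairwise product vanishes. Your computations are sound; the discrepancy is in the wording of the corollary, not in your argument.
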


\begin{corollary}\label{intersectionsForGrAnom}
The only non-vanishing intersections of the height-pairings of Corollary \ref{heightp} with $(-K_B)$,  the class of the anti-canonical divisor on the base $B$, are

$(-K_B)  \cdot b_{9,9} = (-K_B)  \cdot b_{10,10} = 2$,

$ (-K_B)  \cdot b_{9,10}=  1$.

\end{corollary}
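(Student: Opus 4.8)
The plan is to compute the intersection numbers $(-K_B) \cdot b_{a,b}$ directly from the explicit expressions for the height pairings $b_{a,b}$ obtained in the previous Corollary, using the intersection theory of the rational elliptic surface $B$ together with the decomposition $[\mathcal{C}] = 3(s_0 + f)$ established in Proposition \ref{lDoDo}. The only genuine input needed is a handful of intersection numbers on $B$: since $B$ is a rational elliptic surface with section, we have $f^2 = 0$, $s_0 \cdot f = 1$, $s_0^2 = -1$, and $-K_B = f$ (the anticanonical class of a rational elliptic surface is the fiber class). The latter is the key identity that makes the whole computation collapse to something short.

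First I would substitute $-K_B = f$ into each of the height pairings listed in the preceding Corollary. For $b_{9,9} = b_{10,10} = -s_0 + \mathcal{C} - f$, using $[\mathcal{C}] = 3(s_0 + f)$ we get $b_{9,9} = -s_0 + 3s_0 + 3f - f = 2s_0 + 2f$ in homology, so $(-K_B) \cdot b_{9,9} = f \cdot (2 s_0 + 2f) = 2(s_0 \cdot f) + 2 f^2 = 2$; same for $b_{10,10}$. For $b_{9,10} = s_0 + f$ we get $f \cdot (s_0 + f) = 1$. For the remaining pairings $b_{k,k} = 2f$, $b_{k',k} = f$, $b_{k,9} = b_{k,10} = s_0 - \mathsf{s}_k + f$, I would observe that $f \cdot f = 0$, and that $f \cdot (s_0 - \mathsf{s}_k + f) = s_0 \cdot f - \mathsf{s}_k \cdot f + f^2 = 1 - 1 + 0 = 0$ since every section $\mathsf{s}_k$ of $r \colon B \to \mathbb{P}^1$ meets the fiber $f$ exactly once. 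This accounts for all vanishing cases and confirms the three nonzero values.

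The only point that requires a word of justification rather than pure bookkeeping is the identification $-K_B = f$: this follows from the canonical bundle formula for the relatively minimal elliptic surface $r \colon B \to \mathbb{P}^1$ with $\chi(\mathcal{O}_B) = 1$, which gives $K_B = r^*(K_{\mathbb{P}^1} + (\chi(\mathcal{O}_B)) \cdot \mathrm{pt}) = r^*(\mathcal{O}_{\mathbb{P}^1}(-2+1)) = -f$, so $-K_B = f$. (Equivalently, $B$ being a rational elliptic surface obtained by blowing up $\mathbb{P}^2$ at nine points of a cubic, $-K_B$ is the proper transform of that cubic, which is precisely a fiber of the elliptic pencil.) I expect no real obstacle here; the main thing to be careful about is consistently reading the $b_{a,b}$ as classes on $B$ (via $\pi_{i\ast}$) rather than on $X_i$, and remembering that $\mathcal{C}$ enters only through its homology class $3(s_0+f)$, not through any finer geometric data. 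Once $-K_B = f$ is in hand the statement is immediate from Proposition \ref{lDoDo} and the preceding Corollary.
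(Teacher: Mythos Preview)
Your proposal is correct and is exactly the computation the paper leaves implicit: the corollary is stated without proof, as it follows immediately from the explicit height-pairing classes in the preceding corollary together with $-K_B = f$ and the intersection numbers on $B$ recorded in Proposition~\ref{lDoDo} (in particular $\mathcal{C}\cdot f = 3$, or equivalently $[\mathcal{C}] = 3(s_0+f)$). Your justification of $-K_B = f$ and the case-by-case check of the vanishing pairings are the right ingredients and match the paper's intended reasoning.
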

\medskip

\smallskip

\section{The spectrum, charges, anomaly cancellation and geometric invariants} \label{sec_Anomalies}

\subsection{General results from F-theory}\label{PhysicsResults}

Compactification of F-theory on $X_i$ gives rise to an effective supergravity theory in six dimensions with $N=(1,0)$ supersymmetry.
Before providing the details of the effective theory, we collect general results for F-theory compactifications on elliptic threefolds that have been derived in the physics literature. 
For derivations and the original references we refer to  the survey articles \cite{Taylor:2011wt,Weigand:2018rez,Cvetic:2018bni}.

For simplicity of presentation and consistently with the Namikawa-Rossi example, we assume that 
$\pi: Y \to B$  is a smooth elliptically fibered Calabi-Yau threefold with base $B$ and zero-section $S_0$. Without loss of generality we assume that the fibration is equidimensional and that $B$ is smooth. We also assume that the Weierstrass model of $Y$,
$\bar Y \to B$
 has no singularities appearing in codimension one, that is, in physics language,  the associated non-abelian gauge group associated in F-theory  is trivial.

 We denote by ${\mathbb S}_a$ a set of independent sections in the Mordell-Weil group $MW(Y/B)$ with Shioda map
images  $\sigma({\mathbb S}_a)$ and height-pairings $b_{a,b} = - \pi_\ast(\sigma({\mathbb S}_a) \cdot \sigma({\mathbb S}_b))$, as in Definition \ref{defShioda}.


\smallskip
\begin{result}[Gauge group and spectrum] {The (abelian) gauge group $G$ of F-theory compactified on $Y$ defined above is $G= \prod_{a=1}^{r} U(1)_a$, where $r$ is the rank of the Mordell-Weil group $MW(Y/B)$.
		The massless physical spectrum comprises
		\begin{enumerate}
			\item  $V=h^{1,1}(Y)-h^{1,1}(B)-1 = {\rm rk}(MW(Y/B))$ vector multiplets,
			\item $T= h^{1,1}(B)-1 $ tensor multiplets,
			\item $H = H_{unch} + H_{ch}$ hypermultiplets, where  $H_{unch} = h^{2,1}(Y) +1$  is the number of uncharged multiplets and $H_{ch}$ the number of hypermultiplets charged under $G$, 
			\item  one universal gravity multiplet.
	\end{enumerate}  }
\end{result}

{ (1), (2) and (4)  immediately provide a correspondence between the massless spectrum and the birational invariants of the elliptic Calabi-Yau $Y$. 
	As for (3) we have: 
	\begin{result}[Charged matter multiplicities]\label{Hch}  
		The charged hypermultiplets $H_{ch}$  in (3)
		are in 1-1 correspondence with the 
		holomorphic curves in the fiber of $Y$ with vanishing intersection with the zero-section $S_0$ (the exceptional fibers of the Weierstrass model).  $H_{ch}$ is computed by either
		\begin{enumerate}[label=\alph*)]  
			\item their Gopakumar-Vafa invariants at genus zero  or 
			\item the localised deformations of the singular fibration $\bar Y \to B$.
		\end{enumerate}
		
	\end{result}    
	\begin{proof} 
		Via duality with M-theory compactified on $Y$, massless hypermultiplets charged under $G$ in F-theory are in 1-1 correspondence with the possible wrappings of M2-branes 
		on the exceptional fibers. 
		The Gopakumar-Vafa index of a curve $C$ at genus zero counts the number of hypermultiplets obtained by wrapping M2-branes on $C$ \cite{Gopakumar:1998jq}. See e.g. \cite{Lee:2018urn, Oehlmann:2019ohh, Knapp:2021vkm} for  applications in F-theory on threefolds. The correspondence with 
		the localised deformations of $\bar Y$ follows from \cite{Katz:1996xe}.
	\end{proof}

	\begin{result}[$U(1)_a$ charges] \label{prop-charges-gen}
		The $U(1)_a$ charges of the massless hypermultiplets associated with the exceptional fibers are computed as the intersections of the respective fibers with the Shioda map images $\sigma({\mathbb S}_a)$. 
	\end{result} 
	\begin{proof} {For a derivation via duality with M-theory see \cite{Park:2011ji} as well as the reviews \cite{Weigand:2018rez,Cvetic:2018bni}.     }
	\end{proof}

	\begin{result} [Anomalies]{\cite{Green:1984bx,Park:2011ji}}\label{prop-anomalies-gen}
		The gravitational, mixed gravitational$-U(1)_a-U(1)_b$ and abelian $U(1)_a - U(1)_b - U(1)_c - U(1)_d$ anomalies are cancelled by the six-dimensional Green-Schwarz mechanism if the following equations hold:
		\begin{eqnarray}
			H - V + 29 T &=& 273  \label{grav-gen}\\
			(-K_B) \cdot b_{a,b} &=&   \frac{1}{6} \sum_I N_I q^I_a q^I_b  \label{grav-U1U1} \\
			b_{a,b} \cdot b_{c,d}  + b_{a,c} \cdot b_{d,b}     + b_{a,d} \cdot b_{c,b}     &=& \sum_I N_I q^I_a q^I_b q^I_c q^I_d  \,.\label{U14}
		\end{eqnarray}
	$ b_{a,b}$ on the   the lefthand side  of (\ref{grav-U1U1}) and (\ref{U14}) is defined in Section \ref{ShiodaHeightP}, Definition \ref{defShioda}.
	\end{result}

	In (\ref{grav-U1U1}) and (\ref{U14}), the righthand side computes the anomaly coefficient for the quartic 1-loop anomalies with
	two and four abelian external legs, respectively, in a six-dimensional $N=(1,0)$ supergravity with $N_I$ massless hypermultiplets
	of $U(1)_a$ charge $q^I_a$.  The lefthand side of (\ref{grav-U1U1}) and (\ref{U14}) represents the contribution to the anomaly from the Green-Schwarz counterterms. 
	
	\subsection{The geometry of the anomaly cancellations}\label{GeomAnom}
From a more general conjecture in 
\cite{GrassiWeigand2} it follows that  for the  smooth elliptically fibered Calabi-Yau threefold $Y \to B$ 
equ. (\ref{grav-gen})
	translates into the relation
	\begin{equation}\label{genconj}
				\boxed{	
			30 K_B^2 + \frac{1}{2} \chi_{top}(Y) =   
		\sum_{Q'} c_{Q'} \, , }
	\end{equation}
where  $	\sum_{Q'} c_{Q'}=H_{ch} $ are the hypermultiplets  charged only by  the abelian factors $ U(1)_a$.
The  Geometric Anomaly Equation (\ref{genconj}) states that the hypermultiplets  charged only by  the abelian factors
	localise at singular points $Q'$ of the discriminant with multiplicity $c_{Q'}$,  giving $H_{ch}= 	\sum_{Q'} c_{Q'} $.

\section{F-theory on the Namikawa-Rossi threefold}\label{sec:FtheoryNR}

We now apply these general results to F-theory compactified on the Namikawa-Rossi threefold. 

\begin{proposition}\label{spectrum} Let $X_i$ be a smooth minimal resolution of the Namikawa-Rossi threefold and consider F-theory compactified on $X_i$.
The gauge group is a product of ${\rm rk(MW}(X_i/B))= 10$ abelian gauge group factors, $G=\prod_{a=1}^{10} U(1)_a$. Each  $U(1)_a$ gauge potential is associated with the Shioda map image of  one of the independent elements $\{ {\mathbb S}_a\} =\{ {\mathbb S}_l, {\mathbb S}_9, {\mathbb S}_{10} \}$ of MW$(X_i/B)$, as computed in Corollary (\ref{intersectionsForAnom}).
Furthermore
	\begin{enumerate}
\item  $V=h^{1,1}(X_i)-h^{1,1}(B)-1 ={\rm rk(MW}(X_i/B)) =10$,
	\item $T= h^{1,1}(B)-1 = 9$,
	\item $H_{unch} = 4$ , $H_{ch}=18$ and $H = H_{unch} + H_{ch}=22$.
\end{enumerate}
\end{proposition}
}
\begin{proof} (1)  and (2) follow by constructions and from the Shioda-Wazir formula;  $h^{2,1}(X_i) +1 =4$ by  Theorem \ref{RossiNamikawa}.
	The holomorphic curves in the fiber of $X_i$ with vanishing intersection with the zero-section $S_0$ (the exceptional fibers) are components of  the  fibers of the points of  the singular locus of the discriminant: $\pi _i ^{-1} (p_j)= ( \mathcal{P}_{j}^{i,0}+  \mathcal{P}_{j}^{i,A} + \mathcal{P}_{j}^{i,B}     )$, $1 \leq j \leq 6$, with the notation as  in Theorem \ref{description}. 
	Each such fiber contains 3 such holomorphic curves in class $\mathcal{P}_{j}^{i,A}$,  $\mathcal{P}_{j}^{i,B}$ and $\mathcal{P}_{j}^{i,A} + \mathcal{P}_{j}^{i,B}$.   By Result \ref{Hch} (a),  their genus-zero Gopakumar-Vafa invariants  of Proposition \ref{GV} invariants compute $H_{ch}$. 
	{Each of the six singularities of the singular fibration $\bar X$ defined in Theorem \ref{RossiNamikawa} can be deformed to  $3$ nodes \cite[Proposition 7]{RossiMIchele20174NamikawaEx}.  }
Each node contributes $+1$ to $H_{ch}$, yielding $H_{ch} = 3 \times 6 = 18$ as well, by  Result \ref{Hch} (b). The deformation do not lift to global deformations of the resolution $X_i$ \cite{NamikawaStratified,RossiMIchele20174NamikawaEx}.
	\end{proof}

\begin{proposition} \label{Charges-NR}
 Let $X_i$ be the Namikawa-Rossi threefold.  Let 
 $q_a$ denote the $U(1)_a$ charges for the hypermultiplets associated with the exceptional fibers $\mathcal{P}_{j}^{i,A}$,  $\mathcal{P}_{j}^{i,B}$ and $\mathcal{P}_{j}^{i,A} + \mathcal{P}_{j}^{i,B}$. Then the non-zero $U(1)_a$ charges are computed as the respective intersections with the Shioda map images $\sigma({\mathbb S}_a)$:
 \begin{center}
 	\begin{tabular}{ c |c c c  } 
 		&  $\mathcal{P}_j^A$ & $\mathcal{P}_j^B$   & $\mathcal{P}_j^A +\mathcal{P}_j^B$  \\\hline  
 		$q_9$         & -1 & \ { 0} & { -1}  \\ 
 		$q_{10}$ & \ { 0} & -1 & { -1}  \\ 
 		$q_{l} $      &  \ { 0} & \ { 0}   & \ {0} \\ 
 	\end{tabular}
 \end{center}
\end{proposition}
\begin{proof} We apply Proposition \ref{intersectionsForAnom} and \ref{intersections} to evaluate the charges as in Result \ref{prop-charges-gen}.
	\end{proof}

\begin{proposition}\label{NR-anS}
F-theory on the Namikawa-Rossi manifold satisfies the anomaly cancellation conditions as collected in Result \ref{prop-anomalies-gen}.
\end{proposition}
\begin{proof} We evaluate the purely gravitational and the abelian and mixed gravitational-abelian anomaly conditions in turn.

\noindent {\it Gravitational Anomalies}~
The condition for cancellation of the purely gravitational anomalies, equ. (\ref{grav-gen}), is manifestly satisfied because $H=H_{unch} + H_{ch} = 4+18=22$, $V=10$ and $T=9$.

\smallskip

\noindent {\it (Mixed) Abelian anomalies}\label{aa} ~
On the righthand side of  (\ref{grav-U1U1}) and (\ref{U14}), applied to F-theory on $X_i$, the index $I$ becomes a multi-index $I=(C,j)$, where $C \in \{A, B, A+B\}$ and $j \in \{1, \ldots, 6\}$
label the curves $\mathcal{P}_{j}^{i,A}$,  $\mathcal{P}_{j}^{i,B}$ and $\mathcal{P}_{j}^{i,A} + \mathcal{P}_{j}^{i,B}$
appearing in the table in  Proposition \ref{Charges-NR}.
$N_I$ counts the number of massless hypermultiplets associated with each of these curves and coincides, by  Result \ref{Hch} (a),
with the corresponding genus-zero Gopakumar-Vafa invariant computed in Proposition \ref{GV}.

With this and the charges as in the table in Proposition \ref{Charges-NR}, and  $1 \leq l \leq 8$, $1 \leq a \leq 10$, equ. (\ref{grav-U1U1}) becomes the requirement that  
\begin{eqnarray}
U(1)_9^2 -{\rm grav}: & (-K)_B  \cdot b_{9,9}=  2  \\
U(1)_9-U(1)_{10} -{\rm grav}: & (-K)_B  \cdot b_{9,10} = 1 \\
U(1)_{10}-U(1)_{10}-{\rm grav}: & (-K)_B  \cdot b_{10, 10}= 2  \\ 
U(1)_{l}-U(1)_{a}-{\rm grav}: & (-K)_B  \cdot b_{l, a}= 0  \,,
\end{eqnarray}
and equ. (\ref{U14}) becomes
\begin{eqnarray}
U(1)_9^4: & b_{9,9} \cdot b_{9,9} =  4   \\
U(1)_9^3 - U(1)_{10}: & b_{9,9} \cdot b_{9,10} =  2    \\
U(1)_9^2  -U(1)_{10}^2: & b_{9,9} \cdot b_{10,10}  + 2 b_{9,10} \cdot b_{9,10}   =  6    \\
U(1)_9 - U(1)_{10}^3: & b_{9,10} \cdot b_{10,10}   =  2    \\     
U(1)_{10}^4: & b_{10, 10} \cdot b_{10, 10}  =  4     \\
U(1)_{l} - U(1)_a -U(1)_b - U(1)_c: & b_{l, a} \cdot b_{b, c} +  b_{l, b} \cdot b_{c, a} +  b_{l, c} \cdot b_{a, b}  =  0  .
  \end{eqnarray}

These equations are manifestly satisfied with the help of Corollaries \ref{intersectionsForGrAnom} and \ref{intersectionsForGaugeAnom}.
\end{proof}

\begin{proposition}\label{GeomAnomNamRossi}
The Namikawa-Rossi manifolds satisfy  the  Geometric Anomaly Cancellation equation (\ref{genconj}).
\end{proposition}
\begin{proof} Indeed,  $K_B^2 =0$, $\chi_{top}(X_i)=36$ by Theorems \ref{RossiNamikawa},  \ref{MW} and 
	$ \sum_{Q'} c_{Q'}= 6 \times 3=18$, by Proposition \ref{GV}.
	\end{proof}

\begin{corollary}\label{birinv}
$ \sum_{Q'} c_{Q'}$ is a birational invariant of the minimal model of the elliptic fibration. 
\end{corollary}
\begin{proof} In fact the left hand side of the equation (\ref{genconj}) is a birational invariant of the minimal model \cite{GrassiWeigand2}.
	\end{proof}

$ \sum_{Q'} c_{Q'}$ is a birational invariant of the non $\Q$-factorial  terminal singularities of the Weierstrass model $\bar X$, in the sense that it is a birational invariant of the $\Q$-factorialization.
\medskip

\section{{The Weierstrass model over $\P^2$, Elkies' birational example}} \label{sec_Elkies}~

In this Section we take the first steps in addressing the question of  whether the model  $W_{\NDE} \to \P^2$  constructed in \cite{Elkies} is  birationally Calabi-Yau.
We prove that  the Weierstrass models over $\pd$  of the Namikawa-Rossi  threefolds  are not the ones constructed by Elkies.

\subsection{Summary of \cite{Elkies}:} In the 2018 seminar talk \cite{Elkies}  Elkies  gave a construction of 
a family of  elliptically fibered threefolds in Weierstrass  form,  $W_{\NDE} \to \P^2$, with $\rk(\mw(W_{\NDE} /\P^2))=10$ and  $K_ {W_{\NDE}} \equiv 0$.  
\cite{Elkies} does not address the question of whether  the minimal  resolutions are Calabi-Yau threefolds.

The starting point of the construction is what Elkies calls an  ``excellent family", that is  elliptic fibrations which  depend on the parameter $\zeta$:

\begin{equation}\label{ElkiesEquation}
	y^2= x^3+ ({p}_4\zeta^4 + {p}_{10}\zeta)x + \zeta^9 + {p}_6 \zeta^6+ {p}_{12}\zeta^{3}+ {p}_{18} \,.
\end{equation} 
In Elkies' construction the variables $(x,y,\zeta)$ have weights $(6,9,2)$ 
and the coefficients ${p}_j$ are the  invariant forms  of degree  $j$ in $\P^4$ for  the Shephard-Todd unitary reflection group  $\ST_{33}$ in $\C^5$ \cite{ShephardTodd}, which we discuss below.
 Then Elkies obtains elliptic  threefolds  ${W_{\NDE}}$ by restricting the coefficients ${p}_j$  to a general $\P^2$ and  by  taking $\zeta$ to be a quadratic form in that $\P^2$.

The discriminant locus  of each fibration $W_{\NDE} \to \P^2$  is then a curve of  degree $36$,  and $K_ {W_{\NDE}} \equiv 0$;  $h^1(\mathcal O_{W_{\NDE} })=0,h^2(\mathcal O_{W_{\NDE} })=0$ by construction. The threefolds $W_{\NDE}$ are potentially birational Calabi-Yau. However, it is easy to construct  Calabi-Yau Weierstrass models with the same numerical properties
with log canonical singularities which are not birationally equivalent to a Calabi-Yau with terminal singularities. The example of \cite{Elkies} might a priori fall into this class.

Elkies' excellent  family extends Shioda's excellent families for rational elliptic surfaces. Here ``excellent" refers to the explicit generators  of  the Mordell-Weil group of sections \cite{ShiodaSchuettBook}. The particular structure of the excellent family implies that $\rk \MW(W_{\NDE}/ \P^2)=10$.

The coefficients $p_j$  are of geometric  interest in their own right; in fact $\ST_{33} \simeq  \Z/{2\Z} \times \Sp(4, \mathbb F_3)$,   where $\Sp(4, \mathbb F_3) \simeq G_{25920}$ is the Burkhardt group
\cite{Hunt96}.
Shephard and Todd \cite{ShephardTodd} prove that the invariants $p_j$   of  $\ST_{33}$  are the same invariants as for  the Burkhardt group $G_{25920}$. The latter were originally computed by Burkhardt \cite{Burkhardt1881}.  In particular  Burkhardt  shows that possible coefficients ${p}_{18}$   are either  the product of lower degree invariants 
or an irreducible polynomial of degree $18$, or a linear combination thereof.

As will become clear in the following section, 
Elkies' special choice $\zeta =0$  in the family (\ref{ElkiesEquation}) could 
be a candidate for the Weierstrass model of the Namikawa-Rossi manifolds. We will now give an explicit construction of the Weierstrass models
and then compare to Elkies' model for $\zeta =0$.

\subsection{Weierstrass models over $\P^2$ of the Namikawa threefolds}\label{WP}~

\begin{proposition}\label{Model} Let $ X_i$  be one  fixed  smooth resolution of the Namikawa threefolds as in Theorem \ref{RossiNamikawa}  with elliptic fibration $\pi_i : X_i \to B$. Let ${\bar \pi}_i$
be the morphism induced by the contraction $B \to \pd$ of the rational curves $\{ \sfs_0, \cdots , \sfs_8\}$:
\[
\begin{tikzcd}
	X_i \arrow[d, "\pi_i"']  \arrow[dr, "{\bar \pi}_i"] & \\
	{B} \arrow[r] &  \pd   \\
\end{tikzcd}
\]	

Then there exists a diagram
\[
\begin{tikzcd}
	X_i \arrow[d, "\pi_i"']  \arrow[dr, "\bar \pi_i"'] \arrow[r, dashed, "\psi_1"]& X'_i  \arrow[l, dashed]  \arrow[d, ]  \arrow[r,  "\psi_2"]&     Z_i \arrow[dl, "\pi_{Z_i}"]  \\
	{B} \arrow[u, bend left=50] \arrow[r]  & \pd \arrow[u, bend left=50, shift right=0.8ex]   \arrow[ur, bend right=40 , shift right=0.6ex] &  \\
\end{tikzcd}
\]
such that the following holds:

\begin{enumerate}
	\item $ \psi_1:  X_i \dasharrow X'_i$, with $X_i'$ smooth, is a birational map constructed as  the composition of  the  81 flops of the rational curves $ {\pi_i}^*(\sfs_k) \cdot S_\ell, \ 0 \leq \ell \leq 8, \  0 \leq  k \leq 8$. 
	The
	discriminant locus of $\bar \pi_{i}$ consists  of  6 irreducible cuspidal curves which intersect pairwise transversely in $9$ distinct smooth points $\{z_0, \cdots z_8\}$.  The fiber over each $z_j$ is  the surface
	${\psi_1}_*(\p1 ^*(\sfs_k)) \simeq \pd$,  $0 \leq k \leq 8$.
	\item  The elliptic fibration $X'_i \to \pd$ has $11$ linearly independent sections (i.e. the rank of ${\rm  MW}(X_i'/\mathbb P^2)$ is $10$), the strict transforms of the sections of $\pi_i$:  \\
	$S'_{ l, \pd}\stackrel{def}= {\psi_1}_*(S_\ell), \ 0 \ \leq \ell \leq  8$, $\ {D'}^i_i  \stackrel{def}={\psi_1}_*(\Do)$ and  ${D'}^i_{i_1} \stackrel{def}={\psi_1}_*(\Du)$ . 
	\item  $\psi_2: X'_i \to Z_i$ is a composition of $9$  birational contractions  with exceptional loci 
	$\{ {\psi_1}_*(\p1 ^*(\sfs_k)) \simeq \pd, 0  \leq k \leq 8 \}$.
	The Calabi-Yau $Z_i$ has $9$ canonical (but not terminal) isolated singularities.  
	\item $Z_i$ is rigid.
	\item The elliptic fibration $Z_i \to \pd$ has $11$ linearly independent sections  (i.e. the rank of ${\rm  MW}(Z_i/\mathbb P^2)$ is $10$).
	\item The elliptic fibration $\pi_{Z_i}:  Z_i \to \pd $ is equidimensional.  
	\item  For every $ k$, the singular fiber $\pi_{Z_i}^{-1}(z_k)$  consists of  $9$ rational curves meeting at  the  point of canonical singularity of $Z_i$.
\end{enumerate}

\end{proposition}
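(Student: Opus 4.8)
The plan is to realise the two birational maps in the diagram completely explicitly --- $\psi_1$ as a composition of simple (Atiyah) flops, $\psi_2$ as a composition of divisorial contractions of $\pd$'s --- and then to read off $(1)$--$(7)$ from Theorem \ref{RossiNamikawa}, Lemma \ref{localequ} and Definition-Lemma \ref{auto}. Concretely, I would first study the surfaces $\pi_i^{\ast}(s_k)=\bar\pi_i^{-1}(z_k)\subset X_i$. For generic $B$ the section $s_k\subset B$ avoids the six cusps $p_j$ of $r\colon B\to\pu$, so $\pi_i^{\ast}(s_k)$ is disjoint from the exceptional locus of $\varphi_i$ and, via $\bar X=B\times_{\pu}B'$, is isomorphic to $s_k\times_{\pu}B'\cong B'$; under this identification $S_\ell=(\bar\pi')^{\ast}(\mathsf s'_\ell)$ restricts to the section $\mathsf s'_\ell$ of $B'$. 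Choosing $\mathsf s'_0,\dots,\mathsf s'_8$ to be nine disjoint $(-1)$-sections of $B'$ contracted by the fixed map $B'\to\pd$, the $81$ curves $C_{k,\ell}:=\pi_i^{\ast}(s_k)\cdot S_\ell$ are pairwise disjoint smooth rational curves, and I would compute their normal bundles from adjunction on the Calabi--Yau $X_i$: one has $N_{\pi_i^{\ast}(s_k)/X_i}=\pi_i^{\ast}N_{s_k/B}=\pi_i^{\ast}\mathcal O_{\pu}(-1)$ and $N_{S_\ell/X_i}\cong\pi_i^{\ast}K_B$ (as $K_{X_i}\equiv0$ and $S_\ell\cong B$ is a section), and since $K_B=-f$ with $f\cdot s_k=1$ both restrict to $\mathcal O(-1)$ on $C_{k,\ell}$; transversality of $\pi_i^{\ast}(s_k)$ and $S_\ell$ then gives $N_{C_{k,\ell}/X_i}=\mathcal O(-1)\oplus\mathcal O(-1)$. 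Hence the $C_{k,\ell}$ are flopping curves and their simultaneous flop yields a smooth Calabi--Yau $X'_i$ and the map $\psi_1$. Because every $C_{k,\ell}$ lies in a fibre of $\bar\pi_i$, the morphism $\bar\pi_i$ factors through $\psi_1$ and induces a morphism $\bar\pi_i'\colon X'_i\to\pd$ with the same generic fibre and discriminant; the discriminant is the image under $B\to\pd$ of the six Type~$II$ fibres of $r$, i.e.\ six cuspidal members of the cubic pencil defining $B$, which by B\'ezout meet pairwise transversely exactly at the nine base points $z_0,\dots,z_8$, their cusps lying away from the $z_k$. Finally, flopping $C_{k,0},\dots,C_{k,8}$ contracts the nine disjoint $(-1)$-curves inside $\pi_i^{\ast}(s_k)\cong B'$, so that $F_k:={\psi_1}_{\ast}(\pi_i^{\ast}(s_k))\cong\pd$ is the fibre over $z_k$ in $(1)$; dually the same $81$ flops contract the nine $(-1)$-curves $s_k$ inside each $S_\ell\cong B$, so ${\psi_1}_{\ast}(S_\ell)\cong\pd$ are honest sections of $\bar\pi_i'$, while ${\psi_1}_{\ast}(\Do),{\psi_1}_{\ast}(\Du)$ remain degree-one multisections; all ten are independent in $\mathrm{NS}(X'_i)$ since flops change neither the Mordell--Weil group nor the Néron--Severi lattice. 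This is $(1)$ and $(2)$.

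Next I would perform $\psi_2$. By adjunction on $X'_i$, the surface $F_k\cong\pd$ has $N_{F_k/X'_i}=K_{\pd}=\mathcal O_{\pd}(-3)$, and the $F_k$ are pairwise disjoint (since $\pi_i^{\ast}(s_k)\cap\pi_i^{\ast}(s_{k'})=\emptyset$ for $k\neq k'$), so each can be blown down, all at once, to a point $q_k$. The resulting $Z_i$ is normal with nine isolated singularities, each analytically the affine cone over $(\pd,\mathcal O_{\pd}(3))$, i.e.\ the $\tfrac13(1,1,1)$ cyclic quotient singularity: this is Gorenstein, canonical but not terminal, and $\psi_2$ is crepant, so $K_{Z_i}\equiv0$, while $h^1(\mathcal O_{Z_i})=h^2(\mathcal O_{Z_i})=0$ are inherited from $X'_i$ through $R{\psi_2}_{\ast}\mathcal O_{X'_i}=\mathcal O_{Z_i}$ (rational singularities) --- this is $(3)$. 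For $(4)$ I would use that the $\tfrac13(1,1,1)$ singularities are rigid (no nontrivial local deformation, in particular no smoothing), so that $Z_i$ is rigid in the appropriate sense; if the stronger statement $h^{2,1}=0$ is intended, one tracks Hodge numbers through the flops (which preserve them) and the nine crepant divisorial contractions. Part $(5)$ is immediate: $\psi_2$ is birational and the contracted divisors $F_k$ are distinct from the ten sections, so the latter push forward to ten independent sections of $Z_i\to\pd$.

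For $(6)$ and $(7)$: since $\bar\pi_i'$ is constant on each $F_k$ it descends to a morphism $\pi_{Z_i}\colon Z_i\to\pd$; over a point outside $\{z_k\}$ the fibre equals that of $\bar\pi_i'$ --- a smooth elliptic curve, a Type~$II$ cuspidal cubic over the smooth locus of the discriminant, or a Type~$IV$ configuration over the six cusps of the cubics --- hence one-dimensional. Over $z_k$ one has $(\bar\pi_i')^{-1}(z_k)=F_k\cup\bigcup_{\ell=0}^{8}C^{+}_{k,\ell}$, where $C^{+}_{k,\ell}$ is the flopped curve replacing $C_{k,\ell}$, a smooth rational curve meeting $F_k$; therefore $\pi_{Z_i}^{-1}(z_k)=\psi_2\bigl((\bar\pi_i')^{-1}(z_k)\bigr)$ consists of the nine rational curves $\psi_2(C^{+}_{k,\ell})$, each passing through the singular point $q_k=\psi_2(F_k)$. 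This gives $(6)$ and $(7)$, and in particular the equidimensionality of $\pi_{Z_i}$.

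The main obstacle is the bookkeeping underlying the first two steps: one must pin down the restriction of each of the ten sections to every fibre $\pi_i^{\ast}(s_k)$ and control what the $81$ simultaneous flops do to them. In particular the curves $\Do\cdot\pi_i^{\ast}(s_k)$ and $\Du\cdot\pi_i^{\ast}(s_k)$ are, by the same adjunction computation, also $(-1,-1)$-curves, so one has to check that the construction is self-consistent --- either by arranging the two contractions $B\to\pd$ and $B'\to\pd$ compatibly with the orbit structure of the order-six automorphism $\tau$ of Definition-Lemma \ref{auto} (so that these curves appear among the $81$ curves $C_{k,\ell}$, and $\Do,\Du$ likewise become $\pd$'s), or by verifying that they are transverse enough to $\Do,\Du$ that the strict transforms survive as genuine Mordell--Weil sections. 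Carrying this out rigorously is precisely where the explicit local equations of Lemma \ref{localequ} are indispensable; it is this local and combinatorial analysis, rather than any single deep theorem, that is the real content of the proof.
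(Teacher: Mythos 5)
Your construction is, at bottom, the same two-step decomposition as the paper's proof: $\psi_1$ is the composition of the $81$ flops of the curves ${\pi_i}^*(s_k)\cdot S_\ell$ and $\psi_2$ contracts the nine surfaces ${\psi_1}_*({\pi_i}^*(s_k))\simeq \pd$, with (4) resting on Schlessinger's rigidity of quotient singularities \cite{SchlessingerInv1971}, exactly as in the paper. The difference is how existence of the maps is justified: the paper quotes the log contraction theorem and the existence of log flips (Theorems \ref{Contr} and \ref{flop}), applied to the pair with boundary $\epsilon\,{\pi_i}^*(s_k)$ and ray $R={\pi_i}^*(s_k)\cdot S_\ell$ for the flops, and to $\mathcal D={\psi_1}_*({\pi_i}^*(s_k))$ with $R$ a line for the contractions, which in particular guarantees that the outputs are projective; you instead build the maps by hand, identifying ${\pi_i}^*(s_k)\cong B'$, computing $N_{C_{k,\ell}/X_i}=\mathcal O(-1)\oplus\mathcal O(-1)$ by adjunction, recognizing the contracted surfaces as $\pd$'s with normal bundle $\mathcal O_{\pd}(-3)$ and the singular points of $Z_i$ as cones over the cubic Veronese (Gorenstein, canonical, not terminal). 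Your route actually supplies the content behind the paper's ``(2), (5), (6) and (7) follow from the construction'' --- the B\'ezout/base-point description of the discriminant, the identification of $\pi_{Z_i}^{-1}(z_k)$ as nine rational curves through the singular point via the flopped curves, and the explicit local type of the singularities --- so it is a useful complement. Two caveats. First, ``their simultaneous flop yields a smooth Calabi-Yau $X'_i$'' and ``each can be blown down, all at once'' need an argument that $X'_i$ and $Z_i$ are projective varieties; the local Atiyah-flop and Veronese-cone pictures only give the analytic modification, and this is precisely what the paper's appeal to the relative log MMP secures, so you should either invoke those theorems as the paper does or exhibit a relatively ample divisor. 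Second, the worry you flag at the end is not an actual obstacle: the curves $\Do\cdot{\pi_i}^*(s_k)$ and $\Du\cdot{\pi_i}^*(s_k)$ are simply not flopped unless the chosen identification of the nine sections makes some $D^i_m\cap S_\ell$ coincide with a $C_{k,\ell}$, and in either case the strict transforms of $\Do$ and $\Du$ still map with degree one onto $\pd$ and hence still furnish the two extra Mordell-Weil generators, because the Mordell-Weil group depends only on the generic fibre, which the flops do not change; your handling of (4) is no weaker than the paper's, which likewise rests only on the local rigidity statement of Schlessinger.
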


\begin{proof}The  statements (1) and (3)  follow from the contraction theorems and the existence of log flips for threefolds,  stated in Appendix \ref{app_background} for convenience: To obtain  the flops in  (1) in Theorem \ref{flop}  we take $Y=\pd$,   $\mathcal D= \epsilon  {\pi_i}^*(\sfs_k)$ for a fixed $k, \  0 \leq k \leq 8$, $\epsilon \ll 1$ and $R=  {\pi_i}^*(s_k) \cdot S_\ell, \ 0 \leq \ell \leq 8$. Each  of these log-flips is a flop.
For each of the contraction morphism in (3), we take in Theorem \ref{Contr} $\mathcal D = {\psi_1}_*(\pi_i^*(\sfs_k)) \simeq \pd$ and $R$ any line in $\pd$. (4) follows from \cite{SchlessingerInv1971}, \cite{AltmannVersal} and the survey \cite{RossiGeomTrans}. 
(2), (5), (6) and  (7) follow from the construction.
\end{proof}

We now give an intrinsic description of the Weierstrass model over $\mathbb P^2$ of the Namikawa-Rossi manifolds.

\begin{lemma}\label{NakayamaW} Let  $S'_{ 0, \pd}= {\psi_1}_*(S_0)$ be a fixed section  for $\bar \pi'_i: X'_i \to \pd$.
There exists a crepant  birational morphism $\psi_3$ such that the following diagram commutes:
\[
\begin{tikzcd}
	X'_i   \arrow[d, "\bar \pi_i' "]  \arrow[r,  "\psi_3"] &    W_{\pd}\arrow[dl, "\pi_{W_{\pd}}"]  \\
	\pd &   \\
\end{tikzcd}
\] 
$ \pi_{W_{\pd}}: W_{\pd} \to \pd$ is the Weierstrass model  of $X'_i$ with marked section $S_{\pd}\stackrel{def}={\psi_3}_*S'_{ 0, \pd}$. 

In addition, $K_{ X'_i }+S'_{ 0, \pd}={\psi_3}^*(K_{ W_{\pd}}+ S_{\pd} )$.
\end{lemma}

\begin{proof} The existence of the Weierstrass model and of the commutative diagram  such that $S'_{ 0, \pd}={\psi_3}^*( S_{\pd} )$ is proved in \cite{Nakayama88}.  The morphism $\psi_3$ is crepant because, with $\Lambda_{\mathbb P^2}$ the support of the discriminant, $\mathcal O_{X'_i} \simeq K_{X'_i} \simeq ( {\bar \pi'_i} )^* (K_{\pd}+ \Lambda_{\pd})$ and  $\mathcal O_{W_{\pd}}\simeq  ( {{\bar \pi }_{W_{\pd}} }) ^*(K_{\pd}+ \Lambda_{\pd}) \simeq K_{W_{\pd}}$  since  $\bar \pi' _i$ and $\pi_{W_{\pd}} $ have the same discriminant. 
\end{proof}

The construction of the Weierstrass model in Lemma \ref{NakayamaW} is not explicit, so we use the construction of the  relative log canonical model instead.

\begin{thm}\label{lc} 	Let  $h$ be a general line in $\pd$,  $F'_{\pd}\stackrel{def}= (\bar \pi'_i)^* (h)$ and $0 < a  \leq 1$.
The  Weierstrass model  $W_{\pd} \to \pd$ 
of the Namikawa-Rossi threefold is the relative log canonical model  of $({{X'_i}}, S'_{0, \pd}+aF'_{\pd} ) $ described in Proposition \ref{Model}. 
$W_{\pd}$ is obtained from $X_i$  by the composition of $\psi_1$, $\psi_2$ and 
the birational contractions of  the  flops of the rational curves $ {\pi_i}^*(s_k) \cdot S_\ell, \ 0 \leq \ell \leq 8, \  0 \leq  k \leq 8$,  the $6$ pairs of curves $\{\PA, \PB\}$. 

$W_{\pd}$  is also the  relative log canonical model  of $({{Z'_i}},  {\psi_2}_*S'_{0, \pd}+a{\psi_2}_*F'_{\pd})$.

\[
\begin{tikzcd}
	& X_i \arrow[d, dashed, "\psi_1"] &\\
	& 	({{X'_i}}, S'_{0, \pd}+aF'_{\pd} ) \arrow[u, dashed, shift left]   \arrow[dl, "{\psi_3}"'] \arrow[d, "{\psi_2}"] \arrow[dr]  &\\
	(W_{\pd}, {\psi_3}_*S'_{0, \pd}+a{\psi_3}_*F'_{\pd}) \arrow[dr, "\pi_{W_{\pd}}"']  &	({{Z'_i}},  {\psi_2}_*S'_{0, \pd}+a{\psi_2}_*F'_{\pd})\arrow[d, "{\pi_{Z_i}}"]  \arrow[l,dashed, "{\phi}"']\arrow[r,"{\psi_4}"] &( Z^{\ell c},   {(\psi_4 \psi_2)_*S'_{0,\mathbb P^2} + a {(\psi_4 \psi_2)}_*F'_{\pd}}) \arrow[dl, "\bar{\pi}"]\\
	& \pd &\\
\end{tikzcd}
\]

\end{thm}
\begin{proof}  For $ 0 \leq a \leq 1$,  $(X'_i, S'_{0, \pd}+a F'_{\pd})$ is  a log canonical pair. General results from the minimal model program together with the existence of abundance in dimension $3$ \cite{KeelMatsukiMcKernanAbundance} ensure the existence of the log canonical model $(Z^{\ell c},S^{\ell c}+aF^{\ell c}))$  for the pair $(X'_i , S'_{0, \pd}+aF'_{\pd})$, $0\ < a \leq 1$,  relative to the fibration $\bar \pi_i'$. 
$K_{Z'_i}+ {\psi_2}_*S'_{0, \pd}+a{\psi_2}_*F'_{\pd} \ $ is ${\pi_{Z_i}}$-nef.  Abundance \cite{KeelMatsukiMcKernanAbundance} gives  the  birational morphism  $\psi_4$ to the log canonical model $
( Z^{\ell c},   {\psi_4}_*S'_{0, \pd}+a{\psi_4}_*F'_{\pd})$	(Definition \ref{lcmodel}). $\psi_4$ contracts the  flops of the rational curves $ {\pi_i}^*(s_k) \cdot S_\ell, \ 0 \leq \ell \leq 8, \  0 \leq  k \leq 8$ and  the $6$ pairs of curves $\{\PA, \PB\}$.
$K_{W_{\pd}}+{\psi_3}_*S'_{0, \pd}+a{\psi_3}_*F'_{\pd} $ is $\pi_{W_{\pd}}$-ample.		$({{X'_i}}, S'_{0, \pd}+aF'_{\pd} )$ is a common  log resolution of the three log canonical  pairs  $(W_{\pd}, \ {\psi_3}_*S'_{0, \pd}+a{\psi_3}_*F'_{\pd}) $, $( Z'_i,  \  {\psi_2}_*S'_{0, \pd}+a{\psi_2}_*F'_{\pd})$ and $ ( Z^{\ell c},  \  {\psi_4}_*S'_{0, \pd}+a{\psi_4}_*F'_{\pd}) $.
The morphisms $\psi_2$,  $\psi_3$ and $\psi_4$ are isomorphisms onto their images when restricted to $ S'_{0, \pd}+aF'_{\pd}$. 		Then 
$ K_{X'_i} + S'_{0, \pd}+aF'_{\pd} \simeq {( \psi_2)}^*(K_{Z'_i}+ {\psi_2}_*S'_{0, \pd}+a{\psi_2}_*F'_{\pd} )$ and 
$ K_{X'_i} + S'_{0, \pd}+aF'_{\pd} \simeq {(\psi_4 \cdot \psi_2)}^*(K_{Z^{\ell c}}+{(\psi_4 \psi_2)}_*S'_{0, \pd}+a{(\psi_4 \psi_2)}_*F'_{\pd} )$ by construction while 
$K_{X'_i} + S'_{0, \pd}+aF'_{\pd} \simeq {\psi_3}^*(K_{W_{\pd}}+{\psi_3}_*S'_{0, \pd}+a{\psi_3}_*F'_{\pd} ) $ by Theorem \ref{NakayamaW}. In particular  $(W_{\pd}, {\psi_3}_*S'_{0, \pd}+a{\psi_3}_*F'_{\pd}) $ satisfies the conditions to be a log canonical model, Definition \ref{lcmodel}. We conclude  as in Section I.4.1. in \cite{Zanardini2021} by recalling  that the log canonical model is unique  \cite[Theorem 3.52 ]{KollarMori}.
\end{proof}

Summarizing:

\begin{corollary}\label{Elkies} 
$\pi_{W_{\pd}}:  {W_{\pd}} \to \pd $ has affine equation $y^2=x^3+\beta(s,t)$, where $\beta(s,t)$
is the equation of the $6$ general cuspidal curves in the pencil of $\pd$ which give rise to the smooth general rational elliptic surface with  $6$ type $II$ fibers  $r: B \to \pu$. The $6$ cuspidal curves intersect in  the points $\{ z_0, \cdots , z_8 \}$.
The Weierstrass model is non-minimal of type $(*,6,12)$ at each of the points   $  \{ z_0, \cdots , z_8 \} \subset \P^2$.  
${W_{\pd}}$ has  $\mathbb Q$-factorial canonical, but not terminal singularities in the fibers over  $  \{ z_0, \cdots , z_8 \} \subset \P^2$. 
${W_{\pd}}$ has non $\mathbb Q$-factorial terminal singularities in the fibers over the $6$ cuspidal points. 
The singular locus of the reduced discriminant consists of $45$ points.
\end{corollary}
\begin{proof}
The affine equation is $y^2=x^3+\beta(s,t)$ because  $j( {W_{\pd}} )=0$. The zero locus of $\beta(s,t)$ is the reduced discriminant, which by (1) in Proposition \ref{Model} and Theorem \ref{lc}  consists of  the $6$ type $II$ fibers in pencil in $\pd$ which give rise to the smooth general rational elliptic surface  $r: B \to \pu$.
The type of the Weierstrass model  then follows, in fact  if  $y^2=x^3+ \alpha x + \beta$  is a local Weierstrass equation and $\delta$ is the equation for the discriminant  then  the triplet $(\nu (\alpha(P)), \nu (\beta(P)), \nu (\delta(P)))$ is given by the vanishing orders at $P$ of $\alpha, \beta$ and $\delta$.  It is non-minimal by definition
.
The contraction $\psi_2$ gives rise to canonical but non-terminal singularities, by part (3) in Proposition \ref{Model}, while the contraction $\psi_4 $ results in non $\mathbb Q$-factorial terminal singularities (see the proof of Theorem \ref{lc}).
\end{proof}

\smallskip

\subsection{Comparison with Elkies' construction}\label{ElkiesConstruction}~

We now compare the Weierstrass model ${W_{\pd}}$ of the Namikawa-Rossi threefolds, which we described  explicitly in Theorem \ref{lc} and Corollary \ref{Elkies},
to Elkies' Weierstrass model (\ref{ElkiesEquation}) for $\zeta=0$, $W_{\NDE,0}: y^2= x^3+{p'}_{18}$. 
It is clear that if $ {p'}_{18} $ is taken to be irreducible, the two Weierstrass models are different.
For more general invariants ${p'}_{18}$ one must answer the question whether the defining equation $\beta(s,t)$ appearing in ${W_{\pd}}$ in Corollary \ref{Elkies}
is the restriction of an invariant of the Burkhardt group to $\mathbb P^2$.
We pursue this investigation in an upcoming paper \cite{GVZ2021}.

\appendix

\section{Review of background material} \label{app_background}
We review some foundational results in birational geometry which can be found for example in  
\cite{KollarMori}.  Applications to relative  log canonical models of elliptic fibrations can be found in  Chapter I  of \cite{Zanardini2021}.
\begin{thm}[Contraction morphism]\label{Contr} Let $\pi: Z \to Y$ be a morphism, $Z$ a threefold, $\mathcal D$ an effective $\mathbb{Q}$-divisor. If $(Z, \mathcal{D})$ has $\Q$-factorial 
	klt singularities and $K_{Z}+ \mathcal{D}$ is not $\pi$-nef, that is $(K_{Z} + \mathcal{D}) \cdot  R < 0$, for some extremal ray $R \in NE({Z} /B)$, then there exists a morphism $\bar \phi : {Z} \to \bar Z$, contracting all the curves in the numerical equivalence (homology) class of $[R]$ such that the following diagram is commutative:
\[
\begin{tikzcd}
({Z}, \mathcal{D} ) \arrow[d, "\pi"'] \arrow[r,"\bar{\phi}"]& (\bar Z,  \mathcal{\bar D}) \arrow[dl, "\bar{\pi}"]\\
Y &\\
\end{tikzcd}
\]
$\bar Z$ is a normal variety and $\dim \NE(Z/B) > \dim \NE(\bar Z/B)$.

\end{thm}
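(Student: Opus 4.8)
The plan is to deduce this from the relative Cone and Contraction Theorem of the minimal model program, specialised to threefolds, where every input I need is unconditional (Mori, Kawamata, Reid, Shokurov, Koll\'ar); no higher-dimensional MMP is required. First I would invoke the relative Cone Theorem for the $\Q$-factorial klt pair $(Z,\mathcal D)$ over $Y$: the cone $\overline{\NE}(Z/Y)$ is the sum of $\overline{\NE}(Z/Y)_{K_Z+\mathcal D\ge 0}$ and a locally discrete family of $(K_Z+\mathcal D)$-negative extremal rays, each spanned by the class of a rational curve lying in a fibre of $\pi$; the ray $R$ of the hypothesis is one of these.

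The technical heart will be the second step, the construction of a supporting divisor. Using the Rationality Theorem together with $\Q$-factoriality (which makes the relevant intersection numbers well defined), I would produce a $\pi$-nef $\Q$-Cartier divisor $H$ on $Z$ whose orthogonal complement meets $\overline{\NE}(Z/Y)$ in exactly the ray $R$, and such that $aH-(K_Z+\mathcal D)$ is $\pi$-ample for some $a>0$; equivalently $H-\tfrac1a(K_Z+\mathcal D)$ is $\pi$-nef and $\pi$-big. Then the relative Base Point Free Theorem of Kawamata and Shokurov applies---this is exactly where the klt hypothesis on $(Z,\mathcal D)$ enters---and yields that $|mH|$ is $\pi$-free for all $m\gg 0$. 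The morphism over $Y$ attached to $|mH|$, composed with its Stein factorisation, is the desired $\bar\phi: Z\to\bar Z$: here $\bar Z$ is normal, $\bar\phi_*\mathcal O_Z=\mathcal O_{\bar Z}$, and $\pi$ factors as $\bar\pi\circ\bar\phi$, giving the commutative triangle, with $\mathcal{\bar D}:=\bar\phi_*\mathcal D$.

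Finally I would identify the contracted locus and the drop of the cone. A curve $C$ in a fibre of $\pi$ is contracted by $\bar\phi$ if and only if $H\cdot C=0$, which by the construction of $H$ holds if and only if $[C]\in R$; hence $\bar\phi$ contracts precisely the curves numerically proportional to $R$. For the last statement, $\bar\phi_*: N_1(Z/Y)\to N_1(\bar Z/Y)$ is surjective with $\overline{\NE}(\bar Z/Y)=\bar\phi_*\overline{\NE}(Z/Y)$, and its kernel is the line spanned by $[C]$; therefore $\dim\NE(\bar Z/Y)=\dim\NE(Z/Y)-1$, the claimed strict inequality.

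I do not anticipate a serious obstacle, since the statement is classical in dimension three; the only points that need genuine care are matching the klt hypothesis to the Base Point Free Theorem and checking the $\pi$-bigness of $aH-(K_Z+\mathcal D)$ in the relative setting over $Y$, both of which are routine. In the write-up it may be cleanest simply to cite the relative contraction theorem for klt pairs from the standard references rather than to reproduce the Cone, Rationality and Base Point Free theorems in full.
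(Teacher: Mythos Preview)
Your proof sketch is correct and follows the classical route via the relative Cone, Rationality and Base Point Free theorems; this is exactly how the contraction theorem is established in the standard references.

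However, there is nothing to compare against: the paper does not prove this statement. Theorem~\ref{Contr} sits in the Appendix as a quoted result from the minimal model program, stated for the reader's convenience so that it can be invoked in the construction of the birational maps in Section~\ref{sec_Elkies}. No argument is given or intended. So your final remark---that in the write-up it would be cleanest simply to cite the relative contraction theorem from the standard references---is precisely what the authors do.
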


\begin{thm}[The flops]\label{flop} Let  $(Z, \mathcal{D})$  a variety with $\Q$-factorial 
	klt singularities. Let   $\bar{\phi}$  be a  $(K_{Z} +\mathcal{D})$ contraction of an extremal ray $R$ as in Theorem \ref{Contr}. 
Assume that $\bar{\phi}$ is small. Then there exists a log flip $ \psi :  ({Z},  \mathcal{D} )  \dasharrow ( Z',   \mathcal{\bar D'})$  of $R$.  That is, 
$K_{Z'}+  \mathcal{D'}$ is $\pi$-nef (i.e. 
 $(K_{Z'} + \mathcal{D'}) \cdot R' > 0$, $\forall \ R' \in \NE(Z'/ \bar Z)$)
  and the following diagram is commutative
\[
\begin{tikzcd}
({Z},  \mathcal{D} ) \arrow[d, "\bar \phi"] \arrow[r, dashed, "\psi"]& ( Z',   \mathcal{\bar D'})  \arrow[l, dashed] \arrow[dl, "\bar {\phi'}"]\\
(\bar Z,   \mathcal{\bar D}) &\\
\end{tikzcd}
\]
 $(Z',  \mathcal{D'})$ has $\Q$-factorial 
klt singularities.
\end{thm}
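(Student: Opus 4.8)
The plan is to construct the flip as the relative log canonical model of $(Z,\mathcal D)$ over $\bar Z$ and then verify each asserted property in turn. Since $\bar\phi$ is small and $(K_Z+\mathcal D)\cdot R<0$, the divisor $-(K_Z+\mathcal D)$ is $\bar\phi$-ample, so one sets
\[
Z' \stackrel{def}= \operatorname{Proj}_{\bar Z}\ \bigoplus_{m\ge 0}\ \bar\phi_\ast\,\mathcal O_Z\bigl(\lfloor m(K_Z+\mathcal D)\rfloor\bigr),
\]
with $\psi:Z\dasharrow Z'$ the induced birational map, $\mathcal D'\stackrel{def}=\psi_\ast\mathcal D$, and $\bar\phi':Z'\to\bar Z$ the structure morphism. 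By construction $K_{Z'}+\mathcal D'$ is $\bar\phi'$-ample, which gives exactly the positivity statement $(K_{Z'}+\mathcal D')\cdot R'>0$ for all $R'\in\NE(Z'/\bar Z)$ required in the theorem.

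First I would observe that the only substantive content is that the graded sheaf of $\mathcal O_{\bar Z}$-algebras displayed above is finitely generated; granting this, $\bar\phi'$ is a projective birational morphism and the remaining assertions are formal. In dimension three this finite generation is precisely the existence theorem for log flips, which I would invoke from the minimal model program rather than reprove (e.g.\ \cite{KollarMori}); its proof proceeds through the reduction of a general flip to a pl-flip together with special termination, and it is the same circle of results the present article already relies on through abundance and the construction of relative log canonical models (cf.\ the use of \cite{KeelMatsukiMcKernanAbundance} in the body). The hard part of the whole argument is exactly this finite generation step.

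Granting it, I would check the remaining properties. Normality of $Z'$ is automatic because the relative log canonical algebra is integrally closed, so its relative $\operatorname{Proj}$ is normal; smallness of $\psi$ follows since $Z$ and $Z'$ are both small over $\bar Z$ and hence isomorphic in codimension one. For $\mathbb Q$-factoriality I would use that $\bar\phi$ contracts a single extremal ray, so $\rho(Z/\bar Z)=1$: any $\mathbb Q$-divisor $D$ on $Z$ satisfies $D\equiv_{\bar Z}\lambda(K_Z+\mathcal D)$ for some $\lambda$, whence on $Z'$ the class $D'-\lambda(K_{Z'}+\mathcal D')$ is pulled back from $\bar Z$; since $K_{Z'}+\mathcal D'$ is $\bar\phi'$-ample and in particular $\mathbb Q$-Cartier, $D'$ is $\mathbb Q$-Cartier, so $Z'$ is $\mathbb Q$-factorial.

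The preservation of klt singularities I would deduce from the negativity lemma. Taking a common log resolution $W\to Z$ and $W\to Z'$ and writing the two discrepancy formulas for $(Z,\mathcal D)$ and $(Z',\mathcal D')$ over $W$, the fact that $-(K_Z+\mathcal D)$ is $\bar\phi$-ample while $K_{Z'}+\mathcal D'$ is $\bar\phi'$-ample forces every discrepancy over the flipping locus to strictly increase across $\psi$. Hence all discrepancies of $(Z',\mathcal D')$ remain strictly greater than $-1$, so the flipped pair $(Z',\mathcal D')$ is again klt. Everything past finite generation is thus bookkeeping with discrepancies and relative Picard numbers, which is why I would present the theorem as a citation to the established three-dimensional MMP.
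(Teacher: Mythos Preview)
The paper does not prove this theorem at all: it is stated in the Appendix, without proof, as a standard result from the three-dimensional minimal model program, recorded for the reader's convenience alongside the contraction theorem and the definition of the log canonical model. Your sketch is the standard construction of the flip as the relative $\operatorname{Proj}$ of the log canonical algebra, with finite generation deferred to the literature (exactly as you say, e.g.\ \cite{KollarMori}), followed by the routine verification of $\mathbb Q$-factoriality via $\rho(Z/\bar Z)=1$ and preservation of klt via the negativity lemma; this is correct and is precisely what a textbook proof looks like. So there is nothing to compare against: the paper treats the statement as a black box, and you have supplied the expected outline of why the box contains what it claims.
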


There is also a relative version.
\begin{definition}
	Let  $Z, Y$  be normal varieties,   $f: X \to Z$ a birational morphism and $(Z, \mathcal D)$ a pair such that $K_Z+ \mathcal D$ is $\Q$-Cartier. Let $\{ E_j\} $ be the collection  of the  exceptional divisors; then the formula $$K_Y+(f^{-1})_*(\mathcal D) \equiv f^* (K_X+ \mathcal D) + \sum_{j}a(E_j,Z, \mathcal D)E_j$$ defines $a(E_j,Z, \mathcal D)$.
	
	$({Z}, \mathcal{D} ) $ is  a log canonical  pair if and only if  $\inf_{j}a(E_j,Z, \mathcal D) \geq -1$.
	
\end{definition}

\begin{definition}\label{lcmodel} Let $({Z}, \mathcal{D} ) $ be a log canonical pair and $\pi: Z \to Y$ a proper morphism.  $( Z^{\ell c},  \mathcal{ D}^{\ell c})$ is the log canonical model over $Y$ if in the following diagram:
	\[
	\begin{tikzcd}
	({Z}, \mathcal{D} ) \arrow[d, "\pi"'] \arrow[r, dashed, "{\phi}"]& ( Z^{\ell c},  \mathcal{ D}^{\ell c}) \arrow[dl, "\bar{\pi}"]\\
	Y &\\
	\end{tikzcd}
	\]
	\begin{enumerate}
		\item $\bar \pi$ is proper
		\item $\phi ^{-1}$ has no exceptional divisor
\item $\phi_*(\mathcal D)= \mathcal D^{\ell c}$
\item $K_{Z^{\ell c}}+ \mathcal{D}^{\ell c}$ is $\bar \pi$-ample
\item for every $\phi$-exceptional  divisor $E \subset Z$,  $a(E,Z, \mathcal D) \leq a(E,Z^{\ell c}, \mathcal D^{\ell c})$.
	\end{enumerate}
	\end{definition}

\section{Derivation of intersection numbers} \label{App_intersection}

In this appendix we exemplify the derivation of the intersection numbers presented in Proposition \ref{intersections}. These intersections are in a neighborhood of the resolution of each singular point. We derive  (1), (2) and (3) in the proof   using geometry,  the local equations around a singular point of the threefold and its resolution around the exceptional loci. 

With the notation from Theorem \ref{description} and Definition \ref{dik} 
 we recall that the section $D^i_{i+1}$ on $X_i$ is by construction  the strict transform  of  $D_{i +1}$ on $\bar X$  
 by the resolution $\varphi_i$.  In a neighborhood   of  the exceptional loci, $D^i_i \cap D^i_{i+1} = \cup_j \PA \cup \PB$. We note also that 
  $D_{i +1}$ inherits from $r: B \to \P$  the structure of a  rational elliptic surface with 
 six fibers of type $II$. ${\varphi_i}|_{\Du}$ induces
two  blow ups of  the type $II$ fibers at the cuspidal points.  $\Du$ is  then a  non minimal rational elliptic surface 
with exceptional curves $\cup_j \PA \cup \PB$.  Let 
$E_{0, \Du}  $ be the strict transform of the cuspidal fiber in $\Du$, and  $E_{\Du} $  be the general fiber (note that $\pi_{i} (E_{\Du} )= f \in B$).

Then  
 $ E_{\Du}  \equiv (E_{0, \Du} + 2 \PA + 3 \PB)|_{D^i_{i+1}}$ with 
 $(\PA \cdot \PA )_{|\Du}= -2$,  $(\PB \cdot \PB)_{|\Du}= -1$,  for any $1 \leq j \leq 6$.
The three component curves $E_{0, \Du}$, $\PA$ and $\PB$ intersect in one point.

Hence we obtain the following intersection numbers:
\bea
D^i_i \cdot \PA &=& ({D^i_i \cdot \PA})_{|\Du} = \frac{1}{2} (\PA + \PB) \cdot ( E_{\Du}  - E_{0, \Du} - 3 \PB)|_{D^i_{i+1}} \nonumber \\
& =& \frac{1}{2} (0 -2-3 +3) = -1  \,, \nonumber \\
D^i_i \cdot \PB &=& \frac{1}{3} D^i_i \cdot (E_{\Du}  - E_{0, \Du}  - 2 \PA)|_{D^i_{i+1}}  \nonumber\\
&=& \frac{1}{3} (\PA + \PB) \cdot (E_{\Du}  - E_{0, \Du} - 2 \PA)|_{D^i_{i+1}} \nonumber \\
& =& \frac{1}{3} (0 -2-2(-2+1)) = 0  \,.\nonumber 
\eea
Either from the local equations of the  resolved Calabi-Yau,   or from the above intersections together with $D^i_i \cdot {\mathcal E}=1$  we find also
\bea
D^i_i \cdot \mathcal{P}_{j}^{i,0} = 2 \,.  \nonumber 
\eea

The intersection numbers with $D^i_{i+1}$ follow similarly, noting however  that the strict transform of
$D_i$ after the first blow up acquires $A_1$ singularities, which are then resolved in the second blow up.

\smallskip

\bibliographystyle{siam}
\bibliography{bibio}

\end{document}